\numberwithin{equation}{section}
\newtheorem{theo}{Theorem}[section]
\newtheorem{prop}[theo]{Proposition}
\newtheorem{lema}[theo]{Lemma}
\newtheorem{coro}[theo]{Corollary}
\theoremstyle{definition}
\newtheorem{Question}{Question}
\newtheorem{rema}[theo]{Remark}
\newtheorem{exam}[theo]{Example}
\newtheorem{defi}[theo]{Definition}
\NewDocumentCommand{\tos}{O{=} O{a.s.}}{\stackrel{\smash{\scriptscriptstyle\mathrm{#2}}}{#1}}
\DeclarePairedDelimiter{\br}{(}{)}
\NewDocumentCommand{\Prob}{O{what} O{\left(} O{\right)}}{\mathbb{P}#2 #1 #3}
\newcommand{\Pro}[1][P]{\mathbb{#1}}
\newcommand{\E}{{\mathbb E}}
\NewDocumentCommand{\tosy}{O{=} O{a.s.}}{\stackrel{\smash{\scriptscriptstyle\mathrm{#2}}}{#1}}
\NewDocumentCommand{\pof}{O{} O{} O{P}}{\mathbb{#3}_{#2}\left(#1\right)}
\NewDocumentCommand{\cmv}{O{X} O{Y} O{}}{\E_{\mathbb{#3}}\left(#1|#2\right)}
\newcommand{\iof}[1][]{\mathbbm{1}_{#1}}
\newcommand{\mb}[1]{\mathbf{#1}}
\newcommand{\mc}[1]{\mathcal{#1}}
\NewDocumentCommand{\lo}{O{x} O{2}}{\log_{#2}\left(#1\right)}
\NewDocumentCommand{\h}{O{X} O{}}{\mathbf{H}_{#2}\left(#1\right)}
\NewDocumentCommand{\ph}{O{X} O{}}{\mathbf{H}_{#2}\left(\mb{#1}\right)}
\NewDocumentCommand{\ch}{O{X} O{Y} O{}}{\mathbf{H}_{#3}\left(#1\;|\; #2\right)}
\NewDocumentCommand{\cph}{O{X} O{Y}}{\mathbf{H}\left(\mb{#1}\;|\; \mb{#2}\right)}
\NewDocumentCommand{\nof}{O{1} O{in}}{\#_{#1}\left(#2\right)}
\NewDocumentCommand{\rh}{O{X} O{Y}}{\mathbf{H}\left(#1 || #2\right)}
\NewDocumentCommand{\rhp}{O{X} O{Y}}{\mathbf{H}\left(\mathbf{#1} || \mathbf{#2}\right)}
\NewDocumentCommand{\proc}{O{X} O{\Z}}{\left(#1_{i}\right)_{i\in #2}}
\NewDocumentCommand{\crh}{O{X} O{Y} O{Z} O{W}}{\mathbf{H}\left(#1 | #2\Big|\Big| #3 | #4\right)}
\NewDocumentCommand{\ts}{O{past}}{\mathcal{T}_{#1}}
\NewDocumentCommand{\nsf}{O{0} O{}}{\mathcal{F}_{#1}^{#2}} 
\NewDocumentCommand{\norm}{O{p} O{TV}}{\left\|#1\right\|_{#2}}
\newcommand{\R}{{\mathbb{R}}}
\newcommand{\Z}{{\mathbb{Z}}}
\newcommand{\N}{{\mathbb{N}}}
\newcommand{\mob}{\boldsymbol{\mu}}
\newcommand{\raz}{\mathbbm{1}}
\tikzset{ 
	table/.style={
		matrix of nodes,
		row sep=-\pgflinewidth,
		column sep=-\pgflinewidth,
		node distance=0,
		outer sep=0,
		inner sep=0,
		nodes={
			rectangle,
			draw=black,
			align=center
		},
		minimum height=1.5em,
		text depth=0.5ex,
		text height=2ex,
		nodes in empty cells,
		every even row/.style={
			nodes={fill=gray!20}
		},
		column 1/.style={
			nodes={text width=10em,font=\bfseries}
		},
		column 2/.style={
		nodes={text width=14em}
		},
		column 3/.style={
			nodes={text width=15em}
		},
		row 1/.style={
			nodes={
				fill=black,
				text=white,
				font=\bfseries
			}
		}
	}
}
\title{Entropy rate of product of independent processes}
\author{Joanna Ku\l{}aga-Przymus \and Micha\l{} D.\ Lema\'{n}czyk}
\begin{document}
\maketitle
\thispagestyle{empty}

\abstract{
We study the multiplicative version of the classical Furstenberg's filtering problem, where instead of the sum $\mb{X}+\mb{Y}$ one considers the product $\mb{X}\cdot \mb{Y}$ ($\mb{X}$ and $\mb{Y}$ are bilateral, real, finitely-valued, stationary independent processes, $\mb{Y}$ is taking values in $\{0,1\}$). We provide formulas for $\ch[\mb{X}\cdot\mb{Y}][\mb{Y}]$. As a consequence, we show that if $\h[\mb{X}]>\h[\mb{Y}]=0$ and $\mb{X}\amalg \mb{Y}$, then $\h[\mb{X}\cdot \mb{Y}]<\h[\mb{X}]$ (and thus $\mb{X}$ cannot be filtered out from $\mb{X}\cdot\mb{Y}$) whenever $\mb{X}$ is not bilaterally deterministic, $\mb{Y}$ is ergodic and $\mb{Y}$ first return to $1$ can take arbitrarily long with positive probability. On the other hand, if almost surely $\mb{Y}$ visits $1$ along an infinite arithmetic progression of a fixed difference (with possibly some more visits in between) then we can find $\mb{X}$ that is not bilaterally deterministic and such that $\h[\mb{X}\cdot\mb{Y}]=\h[\mb{X}]$. As a consequence, a $\mathscr{B}$-free system $(X_\eta,S)$ is proximal if and only if there is always an entropy drop $h(\kappa\ast\nu_\eta)<h(\kappa)$ for any $\kappa$ corresponding to a non-bilaterally deterministic process of positive entropy. These results partly settle some open problems on invariant measures for $\mathscr{B}$-free systems.

\tableofcontents

\section{Background and main results}

In this paper we concentrate on two seemingly unrelated areas:
\begin{enumerate}[label=(\Alph*)]
\item
multiplicative version of the classical Furstenberg's problem on defiltering a noisy signal,
\item
open questions related to invariant measures for so-called $\mathscr{B}$-free systems.
\end{enumerate}
We will now give some background on both, (A) and (B). Then we will present the main technical result and its consequences. Finally, since the paper is a mixture of probabilistic and ergodic tools, we present in a separate section a dictionary allowing for a simultaneous use of both. The remainder of the paper is devoted to the proofs and examples illustrating our results. In the appendix we give some more detailed comments on $\mathscr{B}$-free systems that can be of an independent interest.

        \subsection{Furstenberg's filtering problem}\label{se:fur}
           The classical Furstenberg's filtering problem from the celebrated paper~\cite{MR0213508} concerns two stationary real processes: $\mb{X}$ (the emitted signal) and $\mb{Y}$ (the noise), with $\mb{X}\amalg\mb{Y}$, and the following question is asked:
           \begin{Question}[\cite{MR0213508}]\label{QFu}
              When is $\mb{X}$ measurable with respect to $\sigma$-algebra generated by $\mb{X}+\mb{Y}$? In other words, when it is possible to recover $\mb{X}$ from the received signal $\mb{X}+\mb{Y}$?
           \end{Question}

           In order to address this problem, Furstenberg~\cite{MR0213508} introduced the notion of {disjointness} of dynamical systems, which even today remains one of the central concepts in ergodic theory. Recall that measure-theoretic dynamical systems $(X,\mathcal{B},\mu,T)$ and $(Y,\mathcal{C},\nu,S)$ are \textit{disjoint} if the product measure $\mu\otimes \nu$ is the only $(T\times S)$-invariant measure, projecting as $\mu$ and $\nu$ onto the first and second coordinates, respectively.\footnote{The ergodicity of $(X,\mathcal{B},\mu,T)$ or $(Y,\mathcal{C},\nu,S)$ is a necessary condition for disjointness.} Recall also that each measure-theoretic dynamical system $(X,\mathcal{B},\mu,T)$ yields a family of bilateral, real, stationary processes in the following way. For any measurable function $f\colon X\to \R$, the process $\mb{X}=(f\circ T^i)_{i\in\Z}$ is stationary. In particular, each measurable partition of $X$ into finitely many pieces yields a finitely-valued stationary process. On the other hand, each real stationary process $\mb{X}$ yields a (symbolic) measure-theoretic dynamical system by taking the left shift $S$ on the product space~$\R^\Z$, with the invariant measure given by the distribution of~$\mb{X}$ (if the state space of $\mb{X}$ is smaller than $\R$, we can consider the left shift $S$ on the appropriate smaller product space). A crucial basic observation is that whenever the family of functions $\{f\circ T^i:i\in \Z\}$ generates $\mathcal{B}$ then the resulting symbolic (measure-theoretic) dynamical system is isomorphic to $(X,\mathcal{B},\mu,T)$. Last, but not least, we say that processes $\mb{X}$ and $\mb{Y}$ are \textit{absolutely independent}, whenever the resulting dynamical systems are disjoint. Furstenberg showed that absolute independence is a sufficient condition, under which one has the positive answer to Question~\ref{QFu}:
           \begin{theo}[\cite{MR0213508}]\label{tw:fu}
              Suppose that $\mb{X}$ and $\mb{Y}$ are integrable and that $\mb{X}$ is absolutely independent from $\mb{Y}$. Then $\mb{X}$ is measurable with respect to $\sigma$-algebra generated by $\mb{X}+\mb{Y}$.
           \end{theo}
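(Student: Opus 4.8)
The plan is to derive the measurability of $\mb{X}$ from the disjointness hypothesis through a relatively independent self-joining. Write $\mb{Z}=\mb{X}+\mb{Y}$ and set $\mc{Z}=\sigma(\mb{Z})$; note that $(\mb{X},\mb{Y})$ is stationary by independence, so $\mb{Z}$ generates a factor of the symbolic system attached to $(\mb{X},\mb{Y})$. I would first recall the standard characterization of measurability with respect to a factor: a square-integrable coordinate $X_i$ is $\mc{Z}$-measurable precisely when, in the relatively independent self-joining of the $(\mb{X},\mb{Y})$-system over $\mc{Z}$, the two copies of $X_i$ agree almost surely (since there the off-diagonal integral of $X_i$ equals $\lVert\E[X_i\mid\mc{Z}]\rVert_2^2$). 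Thus the whole problem reduces to building this self-joining and showing that the two copies of $\mb{X}$ coincide.

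Concretely, I would take two conditionally independent copies $(\mb{X},\mb{Y})$ and $(\mb{X}',\mb{Y}')$ of the pair process, coupled so as to share the same factor image $\mb{Z}=\mb{X}+\mb{Y}=\mb{X}'+\mb{Y}'$; denote the resulting shift-invariant measure by $\Theta$. Its defining feature is the pointwise identity $\mb{X}-\mb{X}'=\mb{Y}'-\mb{Y}$. The independence inputs needed are fourfold: $\mb{X}\amalg\mb{Y}$ and $\mb{X}'\amalg\mb{Y}'$ hold by the standing independence, while the decisive extra relations $\mb{X}'\amalg\mb{Y}$ and $\mb{X}\amalg\mb{Y}'$ are where disjointness enters. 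Indeed, under $\Theta$ the marginal law of $(\mb{X}',\mb{Y})$ is shift-invariant with the correct marginals, hence is a joining of the $\mb{X}$- and $\mb{Y}$-systems; absolute independence forces it to be the product measure, giving $\mb{X}'\amalg\mb{Y}$, and symmetrically $\mb{X}\amalg\mb{Y}'$.

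With these four independences, the core of the argument is a one-line second-moment computation, carried out coordinate by coordinate (by stationarity it suffices to treat $X_0$). Using $X_i-X_i'=Y_i'-Y_i$ and bilinearity,
\[
\operatorname{Var}_{\Theta}\!\big(X_i-X_i'\big)=\operatorname{Cov}_{\Theta}\!\big(X_i-X_i',\,Y_i'-Y_i\big),
\]
which expands into the four covariances $\operatorname{Cov}(X_i,Y_i')$, $\operatorname{Cov}(X_i,Y_i)$, $\operatorname{Cov}(X_i',Y_i')$, $\operatorname{Cov}(X_i',Y_i)$ (with alternating signs), each vanishing because the corresponding pair is independent. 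Since the copies have equal marginals the difference also has mean zero, so $X_i=X_i'$ holds $\Theta$-almost surely; hence $\mb{X}=\mb{X}'$ a.s., which is exactly the statement that $\mb{X}$ is $\mc{Z}$-measurable.

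I expect two delicate points. The main obstacle is justifying the cross-independences: one must verify that the relevant marginals of $\Theta$ are genuine shift-invariant joinings (so disjointness is applicable) and keep careful track of the identifications between the symbolic processes and the abstract factor $\mc{Z}$. The second point is that the covariance computation is a priori an $L^2$ statement while the hypothesis is mere integrability; I would close this gap either by a truncation and approximation argument, or by recasting the computation through the conditional expectations $\E[X_i\mid\mc{Z}]$ — for which $L^1$ suffices — and using the disjointness-induced independences to force the conditional variance to vanish.
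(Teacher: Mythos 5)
Your argument is correct and is essentially Furstenberg's original proof of this theorem (the paper itself only quotes the result from~\cite{MR0213508} without proof): form the relatively independent self-joining over $\sigma(\mb{X}+\mb{Y})$, use disjointness to make the cross-pairs $(\mb{X},\mb{Y}')$ and $(\mb{X}',\mb{Y})$ independent, and conclude from the pointwise identity $(X_0-X_0')^2=(X_0-X_0')(Y_0'-Y_0)$. The integrability worry at the end resolves itself more simply than you suggest: the right-hand side of that identity is a signed sum of four products of \emph{independent} integrable random variables, hence integrable with expectation $0$ by Fubini, so the square on the left is automatically integrable with expectation $0$ and no truncation or $L^2$ hypothesis is needed.
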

           Moreover, Garbit~\cite{MR2783975} showed that the integrability assumption can be dropped and the assertion of Theorem~\ref{tw:fu} still holds.

           We are interested in the following modification of Question~\ref{QFu}: instead of the sum of processes $\mb{X}$ and $\mb{Y}$, we consider their product
           $$
              \mb{M}:=\mb{X}\cdot\mb{Y}=(X_i\cdot Y_i)_{i\in\Z}.
           $$
           Notice that if $\mb{X}$ and $\mb{Y}$ take only positive values, we can define processes $\log \mb{X}$ and $\log \mb{Y}$. Since $\log\mb{M}=\log\mb{X}+\log \mb{Y}$, by the result of Garbit, $\mb{X}$ can be recovered from $\mb{M}$ whenever $\mb{X}$ and $\mb{Y}$ are disjoint. Therefore, it is natural to ask whether the same conclusion as in Theorem~\ref{tw:fu} holds for processes that admit zero as a value. The simplest instance of this is when the state space, e.g., of $\mb{Y}$, equals $\{0,1\}$. One can think of $\mb{M}$ as of the original signal $\mb{X}$, where some of the information was lost (due to $Y_i=0$), instead of just being perturbed (by adding $Y_i$ to $X_i$). Thus, we deal with the following problem:
           \begin{Question}\label{Q2a}
              Let $\mb{X}$ and $\mb{Y}$ be bilateral, real, finitely-valued, stationary processes, with $Y_i\in \{0,1\}$. Suppose that $\mb{X}\amalg \mb{Y}$. Is it possible to recover  $\mb{X}$ and / or $\mb{Y}$ from $\mb{M}$?
           \end{Question}

A similar (in fact, much more general) problem of retrieving a lost signal was studied by Furstenberg, Peres and Weiss in~\cite{Fu-Pe-We}. Let $\mb{X}^{(i)}=\left( X_i^{(U_i)}\right)_{i\in \mathbb{Z}}$, where $i\in\N$, be a family of processes and $\mb{U}$ be an $\N$-valued process. Suppose that all these processes are stationary and define
\[
\mb{X}^{(\mb{U})}=\left(X_i^{(U_i)} \right)_{i\in\Z}
\]
(informally, $\mb{U}$ chooses among the family of processes). 
\begin{Question}\label{FPW}
Is it possible to recover  $\mb{U}$ from $\mb{X}^{(\mb{U})}$?
\end{Question}
In order to answer this question the authors of \cite{Fu-Pe-We} introduce the notion of double disjointness. We say that process $\mb{Y}$ is doubly disjoint from $\mb{X}$ if every self-joining of $\mb{Y}$ is absolutely disjoint from $\mb{X}$. In other words if $(\mb{X}',\mb{Y}', \mb{Y}'')$ is a stationary process such that $\mb{X}' \sim \mb{X}$ and $\mb{Y}', \mb{Y}'' \sim \mb{Y}$ then $\mb{X}'\amalg(\mb{Y}', \mb{Y}'') $. The most basic example of doubly disjoint processes arises when $\mb{Y}$ is of zero entropy rate (then every self-coupling of $\mb{Y}$ has zero entropy) and $\mb{X}$ has trivial tail-$\sigma$-field (let us add that in fact if $\mb{Y}$ is doubly disjoint from $\mb{X}$ then \textbf{necessarily} $\ph[Y] = 0$ and $\mb{X}$ is ergodic). (For the definition of entropy rate, see~\eqref{zwykla} below.) Now, the main result of~\cite{Fu-Pe-We} can be summarized (roughly) as follows. Suppose that $\mb{X}^{(i)}$ for $i\in\N$ and $\mb{U}$ are jointly stationary. If $\mb{U}$ is doubly disjoint from each $\mb{X}^{(i)}$ for $i\in\N$ then one can retrieve $\mb{U}$ from $\mb{X}^{(\mb{U})}$. 
		  		
		  		Let us explain how to fit this theorem to our setting from Question \ref{Q2a} (and retrieve $\mb{Y}$ from $\mb{M}$). Consider two processes $\mb{X}^{(i)}$, for $i \in \{0, 1\}$, where
		  		\begin{equation}
			  		X_j^{(i)} = iX_j 
		  		\end{equation}
		  		and take $\mb{U} = \mb{Y}$. Then $\mb{X}^{(\mb{U})} = \mb{X} \cdot \mb{Y}$ and the theorem states that we can retrieve $\mb{Y}$ from  $\mb{X} \cdot \mb{Y}$ as soon as $\mb{Y}$ is doubly disjoint from $\mb{X}$. Since the role of $\mb{X}$ and $\mb{Y}$ is here not symmetric (and $\mb{M}$ and $\mb{Y}$ do not determine $\mb{X}$ unlike when one studies the sum $\mb{X}+\mb{Y}$), it is interesting to ask whether one can also retrieve~$\mb{X}$. To stay compatible with the notion of double disjointness, we will assume that $\h[\mb{X}]>\h[\mb{Y}]=0$.
           Then, clearly, a necessary condition for having the positive answer to Question~\ref{Q2a} is that $\h[\mb{M}]=\h[\mb{X}]$. Having this in mind, we will deal with the following three more specific problems:
           \begin{Question}\label{Q:procesy}$~$
              \begin{enumerate}[label={(\Alph*})]
                 \item\label{qa1}
                       Is there a general formula for the entropy rate $\mb{H}(\mb{M})$ of $\mb{M}=\mb{X}\cdot \mb{Y}$?
                 \item\label{qa2}
                       Do we always have $\mb{H}(\mb{M})>0$ whenever $\mb{H}(\mb{X})>0$?
                 \item\label{qa3}
                       Can we have $\mb{H}(\mb{M})=\mb{H}(\mb{X})$ with $\h[\mb{X}]>0$?
              \end{enumerate}
           \end{Question}
           \begin{rema}
           Notice that the answers to Question~\ref{QFu} in~\cite{MR0213508} and to Question~\ref{FPW} in~\cite{Fu-Pe-We} depend only on the properties of the underlying dynamical systems corresponding to $\mb{X}$ and $\mb{Y}$. In this paper the situtation will be different and the ability to defilter $\mb{X}$ from $\mb{M}$ will highly depend on the properties of the stochastic processes under consideration. Cf.\ Example~\ref{i}.
           \end{rema}

        \subsection{Invariant measures for \texorpdfstring{$\mathscr{B}$}{}-free systems}\label{se:inv}

           Question~\ref{Q:procesy} is a	 generalization of some questions asked in~\cite{Ku-Le-We} in the context of $\mathscr{B}$-free systems. For $\mathscr{B}\subset \N\setminus \{1\}$, consider the corresponding sets of multiples and $\mathscr{B}$-free integers, i.e.\
           $$
              \mathcal{M}_\mathscr{B}:=\bigcup_{b\in\mathscr{B}}b\Z \text{ and }\mathcal{F}_\mathscr{B}:=\Z\setminus \mathcal{M}_\mathscr{B},
           $$
           respectively. Such sets were studied already in the 30's from the number-theoretic viewpoint (see, e.g.~\cite{Davenport:1933aa,zbMATH03014412,MR1512943,Davenport1936,MR0043835,MR1574879}). The most prominent example of $\mathcal{F}_\mathscr{B}$ is the set of square-free integers (with $\mathscr{B}$ being the set of squares of all primes). The dynamical approach to $\mathscr{B}$-free sets was initiated by Sarnak~\cite{sarnak-lectures} who proposed to study the dynamical system given by the orbit closures of the M\"obius function $\mob$ and its square $\mob^2$ under the left shift $S$ in $\{-1,0,1\}^\Z$.\footnote{Recall that $\mob(n)=(-1)^k$ if $n$ is a product of $k$ distinct primes, $\mob(1)=1$ and $\mob(n)=0$ otherwise; $\mob^2$ is the characteristic function of the set of square-free integers.} For an arbitrary $\mathscr{B}\subset \N\setminus \{1\}$, let $X_\eta$ be the orbit closure of $\eta=\raz_{\mathcal{F}_\mathscr{B}}\in \{0,1\}^\Z$ under the left shift, i.e.\ we deal with a \textit{subshift} of $(\{0,1\}^\Z,S)$.\footnote{More generally, given a finite alphabet $\mc{X}$, we define $S$ to be the left shift on $\mc{X}^\Z$, i.e.\ $S(\proc[x])=\proc[y]$, where $y_i=x_{i+1}$, $i\in\Z$. Each closed $S$-invariant subset of $\mc{X}^\Z$ is called a \textit{subshift}. } We say that $(X_\eta,S)$ is a \textit{$\mathscr{B}$-free system}.
           In the so-called Erd\"os case (when the elements of $\mathscr{B}$ are pairwise coprime, $\mathscr{B}$ is infinite and $\sum_{b\in\mathscr{B}}1/b<\infty$), $X_\eta$ is \textit{hereditary}: for $y\leq x$ coordinatewise, with $x\in X_\eta$ and $y\in\{0,1\}^\Z$, we have $y\in X_\eta$. In other words, $X_\eta=M(X_\eta\times \{0,1\}^\Z)$, where $M$ stands for the coordinatewise multiplication of sequences. For a general $\mathscr{B}\subset \N\setminus\{1\}$,  ${X}_\eta$ may no longer be hereditary and we consider
           its hereditary closure $\widetilde{X}_\eta:=M(X_\eta\times \{0,1\}^\Z)$ instead. Usually, one assumes at least the \textit{primitivity} of $\mathscr{B}$ (i.e.\ $b\ndivides b'$ for $b\neq b'$ in $\mathscr{B}$).

           Given a \textit{topological dynamical system} $(X,T)$, i.e.\ a homeomorphism $T$ acting on a compact metric space $X$, let $\mathcal{B}$ be the $\sigma$-algebra of Borel subsets of $X$. By $\mathcal{M}(X,T)$ we will denote the set of all probability Borel $T$-invariant measures on $X$ and $\mathcal{M}^e(X,T)$ will stand for the subset of ergodic measures. Each choice of $\mu\in \mathcal{M}(X,T)$ results in a \textit{measure-theoretic dynamical system}, i.e.\ a 4-tuple $(X,\mathcal{B},\mu,T)$, where $(X,\mathcal{B},\mu)$ is a standard probability Borel space, with an automorphism $T$. We often skip $\mathcal{B}$ and write $(X,\mu,T)$. Recall also that $x\in X$  is said to be \textit{generic} for $\mu\in\mathcal{M}(X,T)$, whenever $\lim_{N\to\infty}\frac1N\sum_{n\leq N}\delta_{T^nx}=\mu$ in the weak topology. If the convergence takes place only along a subsequence $\br*{N_k}_{k\geq 1}$ then we say that $x$ is \textit{quasi-generic} for $\mu$. Each measure $\mu$ resulting in this way yields a measure theoretic dynamical system $(X,\mathcal{B},\mu,T)$.

           A central role in the theory of $\mathscr{B}$-free systems is played by the so-called \textit{Mirsky measure}, denoted by $\nu_\eta$. In the Erd\"os case, $\eta$ is a generic point for $\nu_\eta$ (in general, $\eta$ is quasi-generic along some natural sequence $(N_k)$), see~\cite{Dy-Ka-Ku-Le}. It was shown in \cite{Ku-Le-We,Dy-Ka-Ku-Le} that all invariant measures for $\widetilde{X}_\eta$ are of the following special form:
           \begin{theo}[cf.\ Section~\ref{se:a2}]\label{tw:postac}
              For any $\nu\in\mathcal{M}(\widetilde{X}_\eta,S)$, there exists $\rho\in \mathcal{M}(X_\eta\times \{0,1\}^\Z, S\times S)$ such that $\rho|_{X_\eta}=\nu_\eta$ and $M_\ast (\rho)=\nu$.\footnote{By $\rho|_{X_\eta}$, we denote the projection of $\rho$ onto the first coordinate $X_\eta$; $M_\ast(\rho)$ stands for the image of $\rho$ via $M$. We will use similar notation later on, too.}
           \end{theo}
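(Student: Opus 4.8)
The plan is to produce the desired $\rho$ as the image of $\nu$ under an explicit equivariant ``maximalization'' lift, so that $\rho$ automatically inherits ergodicity from $\nu$ and the two required identities hold by construction. The only nontrivial ingredient is the structure theory of $\mathscr{B}$-free systems recalled in Section~\ref{se:a2}: both $X_\eta$ and its hereditary closure $\widetilde{X}_\eta$ factor, via a map $\pi$, onto a compact monothetic group $G$ (the $\mathscr{B}$-adic odometer); $(G,\cdot)$ is uniquely ergodic, with Haar measure $\theta$; and each fibre $\pi^{-1}(g)\cap X_\eta$ contains a coordinatewise largest element $x_g$. I would record the two consequences I use: (a) the map $v\colon \widetilde{X}_\eta\to X_\eta$, $v(w):=x_{\pi(w)}$, is Borel, $S$-equivariant, and satisfies $w\le v(w)$ coordinatewise for $\nu$-a.e.\ $w$, since any $w=x\cdot y\in\widetilde{X}_\eta$ is dominated by some $x\in X_\eta$ with $\pi(x)=\pi(w)$, hence by the fibre-maximal $x_{\pi(w)}$; and (b) $\nu_\eta$ is the unique $S$-invariant measure on $X_\eta$ concentrated on the set $Y_{\max}:=\{x_g:g\in G\}$ of fibre-maximal points, equivalently the unique invariant measure of maximal density.

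Granting this, I would argue as follows. Define $\Psi\colon \widetilde{X}_\eta\to X_\eta\times\{0,1\}^{\Z}$ by $\Psi(w)=(v(w),w)$; it is Borel and equivariant for $S\times S$, being built from $v$ and the identity. Since $w\le v(w)$, in every coordinate $i$ we have $v(w)_i\,w_i=w_i$, so $M(\Psi(w))=v(w)\cdot w=w$, i.e.\ $M\circ\Psi=\mathrm{id}$ on $\widetilde{X}_\eta$. Set $\rho:=\Psi_\ast\nu$. Then $M_\ast\rho=(M\circ\Psi)_\ast\nu=\nu$, and $\rho$ is a measure-theoretic factor image of the ergodic system $(\widetilde{X}_\eta,\nu,S)$ under the equivariant map $\Psi$, hence $\rho\in\mathcal{M}^e(X_\eta\times\{0,1\}^{\Z},S\times S)$. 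Finally, $\rho|_{X_\eta}=v_\ast\nu$ is an ergodic invariant measure on $X_\eta$; as $v$ takes values in $Y_{\max}$, it is concentrated on $Y_{\max}$, so by (b) it equals $\nu_\eta$. This yields all three assertions.

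For readers preferring a proof that does not name $v$, the result also follows by a softer route. First lift $\nu$ to some $(S\times S)$-invariant $\rho_0$ with $M_\ast\rho_0=\nu$: take any $\lambda$ with $M_\ast\lambda=\nu$ (it exists since $M$ is a continuous surjection) and pass to a weak-$\ast$ limit of the Cesàro averages $\frac1N\sum_{n<N}(S\times S)^n_\ast\lambda$. Then pass to an ergodic component $\rho$, which still satisfies $M_\ast\rho=\nu$ because $\nu$ is an extreme point of the invariant simplex and $M_\ast$ of an ergodic measure is ergodic. One identifies $\rho|_{X_\eta}=\nu_\eta$ by a maximality argument: among such lifts select a component of maximal first-marginal density $\int x_0\,d\rho$, and observe that any lift not concentrated on $Y_{\max}$ can be strictly enlarged, replacing $(x,y)$ by $(v(x\cdot y),\,x\cdot y)$ without altering $M_\ast\rho$; hence the maximizer lives on $Y_{\max}$ and equals $\nu_\eta$ by (b).

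The main obstacle is entirely contained in the structural inputs (a) and (b): constructing the odometer factor $\pi$, establishing the existence and Borel measurability of the fibre-maximal selection $g\mapsto x_g$ and of $v$, and proving that $\nu_\eta$ is the unique invariant measure carried by $Y_{\max}$. This last point is where the $\mathscr{B}$-free arithmetic enters: via the Borel isomorphism $\pi|_{Y_{\max}}\colon Y_{\max}\to G$ conjugating $S$ to the rotation on $G$, it reduces to the unique ergodicity of the odometer $(G,\theta,\cdot)$. Once these facts are available, the dynamical part of the argument is the short formal computation above.
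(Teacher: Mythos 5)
Your argument hinges on structural input (a): an $S$-equivariant Borel map $\pi$ from $\widetilde{X}_\eta$ to the $\mathscr{B}$-adic odometer $G$, defined $\nu$-a.e.\ for an \emph{arbitrary} $\nu\in\mathcal{M}^e(\widetilde{X}_\eta,S)$, together with a fibre-maximal selection $g\mapsto x_g$. This input is not merely unproved here --- it is false in the generality you need. The hereditary closure $\widetilde{X}_\eta$ contains the fixed point $\mathbf{0}=(\dots,0,0,0,\dots)$, so $\delta_{\mathbf 0}\in\mathcal{M}^e(\widetilde{X}_\eta,S)$; an equivariant map defined $\delta_{\mathbf 0}$-a.e.\ would push this atom forward to a rotation-invariant atom on $G$, contradicting minimality of the odometer (and in the Erd\"os case $X_\eta$ itself is hereditary, so the factor map cannot even be defined everywhere on $X_\eta$). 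More generally, for $w\in\widetilde{X}_\eta$ with sparse support there is no canonical choice of fibre, so $v(w)=x_{\pi(w)}$ is not well defined; and the companion input (b), that $\nu_\eta$ is the unique invariant measure carried by the set of fibre-maximal points, is itself a nontrivial theorem of the $\mathscr{B}$-free structure theory, not something available in Section~\ref{se:a2}. Your ``softer'' second route does produce an ergodic lift $\rho$ with $M_\ast\rho=\nu$ (that part is fine, and ergodicity is handled the same way via ergodic decomposition in Proposition~\ref{simple}), but the identification $\rho|_{X_\eta}=\nu_\eta$ again invokes $v$ and (b); without it the first marginal can easily be wrong: in the Erd\"os case $X_\eta=\widetilde{X}_\eta$, and $\rho=(\mathrm{id},\mathbf 1)_\ast\nu$ with $\mathbf 1=(\dots,1,1,1,\dots)$ is an ergodic lift satisfying $M_\ast\rho=\nu$ whose first marginal is $\nu$, not $\nu_\eta$.

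The paper's proof sidesteps all of this by approximating $\mathscr{B}$ by its finite truncations $\mathscr{B}_K$: there $\eta_K$ is periodic, so a generic point $u_K$ for $\nu$ in $\widetilde{X}_{\eta_K}\supset\widetilde{X}_\eta$ is dominated by an explicit shift of $\eta_K$, yielding a quasi-generic pair $(\eta_K,y_K)$ and hence a lift $\rho_K$ with first marginal exactly $\nu_{\eta_K}$ and $M_\ast\rho_K=\nu$; the passage to the limit $K\to\infty$ uses only the Davenport--Erd\"os theorem (Theorem~\ref{de}) through Lemma~\ref{granica} to get $\nu_{\eta_K}\to\nu_\eta$. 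If you want to keep your strategy, you would have to restrict to measures $\nu$ that genuinely admit the odometer as a factor and prove (a) and (b) from scratch --- at which point you have reproduced the hard parts of \cite{Ku-Le-We,Dy-Ka-Ku-Le} that this appendix is designed to avoid.
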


           Recall that given a measure-theoretic dynamical system $(X,\mathcal{B},\mu, T)$, any $T$-invariant sub-$\sigma$-algebra $\mathcal{A}\subset\mathcal{B}$ is called a \textit{factor} of $(X,\mathcal{B},\mu,T)$.\footnote{Equivalently, if $\pi\colon (X,\mathcal{B},\mu,T) \to (Z,\mathcal{D},\rho,R)$ intertwines the actions of $T$ and $R$, then $R$ is called a factor of $T$ (as $\mathcal{A}=\pi^{-1}(\mathcal{D})\subset \mathcal{B}$ is $T$-invariant).}  Notice that given $\nu$ and $\rho$ as in Theorem~\ref{tw:postac}, $(\widetilde{X}_\eta,\nu,S)$ is a factor of $(X_\eta\times \{0,1\}^\Z,\rho,S\times S)$.

           The measure-theoretic entropy of $(X,\mathcal{B},\mu,T)$ will be denoted by $h_\mu(T,\mathcal{B})$. If no confusion arises, we will also write $h(\mu,T)$ or even $h(\mu)$. If $\mb{X}$ is a finitely-valued stationary process determining $(X,\mu,T)$ (as described in Section~\ref{se:fur}) then $\h[\mb{X}]=h(\mu)$.

           The Mirsky measure $\nu_\eta$ is of zero entropy. Moreover, it was shown in~\cite{Ku-Le-We} in the Erd\"os case that $({X}_\eta,S)$ is intrinsically ergodic (it has exactly one measure realizing the topological entropy). Its measure of maximal entropy equals $M_\ast (\nu_\eta\otimes B_{\nicefrac12,\nicefrac12})$, where $B_{\nicefrac12,\nicefrac12}$ stands for the Bernoulli measure on $\{0,1\}^\Z$ of entropy $\log 2$. These results were extended in~\cite{Dy-Ka-Ku-Le} to a general $\mathscr{B}$ (one needs to replace $X_\eta$ with $\widetilde{X}_\eta$). In the Erd\"os case, the topological entropy of $({X}_\eta,S)$ is equal $\prod_{i\geq 1} (1-\nicefrac1{b_i})$\footnote{In the special case of the square-free system the result was proved by Peckner~\cite{MR3430278}.} (in general, the topological entropy of $(\widetilde{X}_\eta,S)$ equals $\overline{d}(\mathcal{F}_\mathscr{B})$~\cite{Dy-Ka-Ku-Le}).\footnote{By $\overline{d}$ we denote the upper asymptotic density.} This led to the study of \textit{product type measures} (or \textit{multiplicative convolutions}):
           $$
              \nu_\eta \ast \kappa:=M_\ast (\nu_\eta\otimes \kappa).
           $$
           In particular, it was proved that
           \[
              0<h(\nu_\eta\ast B_{\nicefrac12,\nicefrac12})<h(B_{\nicefrac12,\nicefrac12}).
           \]
           Moreover, it was shown that for each value $0\leq h\leq \prod_{i\geq 1} (1-\nicefrac1{b_i})$ there is an ergodic measure $\kappa$ satisfying $h(X_\eta,\nu_\eta\ast \kappa)=h$. However, some fundamental questions related to such measures were left open -- they turn out to be a special instance of Question~\ref{Q:procesy} (see Question 1 in~\cite{Ku-Le-We}):
           \begin{Question}\label{Q1}
              $~$
              \begin{enumerate}[label={(\Alph*})]
                 \item\label{a}
                       Is there a general formula for the entropy $h(\nu_\eta\ast \kappa)$ of $\nu_\eta\ast \kappa$?
                 \item\label{b}
                       Do we always have $h(\nu_\eta\ast \kappa)>0$ whenever $h(\kappa)>0$?
                 \item\label{c}
                       Can we have $h(\nu_\eta\ast\kappa)=h(\kappa)$ with $h(\kappa)>0$?
              \end{enumerate}
           \end{Question}

    \subsection{Main technical result}\label{se11}
    
	Our main tool used to answer Questions~\ref{Q:procesy} and \ref{Q1} is concerned with the entropy rate of stationary processes. Before we can formulate it, we need some definitions and notation that will be used througout the whole paper.

        All random variables and processes will be defined on a fixed probability space $\br*{\Omega, \mc{F},\Pro}$. Sometimes, we will replace the underlying probability measure $\Pro$ by its conditioned version, $\Pro_A(\cdot) = \pof[\cdot \cap A] / \pof[A]$, where  $A\in \mc{F}$ with $\Pro(A)>0$. In particular, $\E_A$ will stand for the expectation taken with respect to $\Pro_A$. For convenience sake, we will write $A, B$ instead of $A\cap B$ for any $A,B\in\mc{F}$: for example, $\E_{A, B}$ stands for $\E_{A\cap B}$. A central role will be played by the \textbf{Shannon entropy} of a random variable $X$, denoted by us by $\h[X]$. Although we will recall basic definitions and properties related to $\h[X]$, some well-known facts will be taken for granted (all of them can be found in \cite{MR3134681}).  All random processes will be \textbf{bilateral} and \textbf{real}. Usually, they will be also \textbf{finitely-valued} and \textbf{stationary}, however sometimes we will need auxiliary countably-valued, non-stationary processes. Recall that a process $\mb{X} = \proc$ is stationary if $\proc$ has the same distribution as $\left(X_{i + 1}\right)_{i\in\Z}$ and finitely-valued if, for every $i\in\Z$, $X_i \in \mc{X}$,  with $\left|\mc{X}\right| < \infty$.

        Let now  $X, Y$ be random variables taking values in finite state spaces $\mc{X}$, $\mc{Y}$ respectively and fix $A\in\mc{F}$ with $\pof[A] > 0$. We put $\h[X][A] = -\sum_{x\in\mc{X}} \pof[X = x][A]\log_2\pof[X = x][A]$. Moreover,  $\ch[X][Y][A] = \sum_{y\in\mc{Y}}\pof[Y = y][A]\h[X][Y= y, A]$ will stand for the \textit{conditional Shannon entropy} of $X$ with respect to $Y$. When $\pof[A] = 1$, we will omit subscript $A$ and write $\h[X]$ and $\ch[X][Y]$, respectively.

        To shorten the notation, we will use the following convention. For a subset $A = \left\{i_1, \ldots, i_n\right\} \subset \Z$ with $i_1 < i_2 < \cdots < i_n$ and a process $\mb{X} = \proc$, we will write
        \begin{equation*}
           X_A = \left(X_{i_1}, X_{i_2},\ldots, X_{i_n}\right).
        \end{equation*}
        Moreover, for any $k \le  \ell$ in $\Z$, we define \textit{integer intervals}:
        \begin{equation*}
           [k, \infty) := \left\{k, k + 1, \ldots\right\},  \qquad  (-\infty, \ell] := \left\{\ell, \ell-1, \ldots \right\}, \qquad [k,\ell] := \left\{k, k + 1, \ldots, \ell\right\}.
        \end{equation*}
        For example, $X_{[0, n]} = \left(X_0, \ldots, X_n\right)$ for $n\in\N$. It is natural and convenient to interpret $[k, \ell]$ as $\emptyset$ if $\ell < k$, $\h[X_{\emptyset}] = 0$ and $\ch[X][Y_{\emptyset}] = \h[X]$.
        
        Consider now two random processes $\mb{X}= \proc[X]$ and $\mb{Y}  = \proc[Y]$ such that $(\mb{X}, \mb{Y}) := \left((X_i, Y_i)\right)_{i\in\Z}$ is stationary. Then
        \begin{equation}\label{zwykla}
           \ph[X] =  \lim\limits_{n\to\infty}\frac{1}{n}\h[X_{[0, n - 1]}], \quad \ch[\mb{X}][\mb{Y}] = \lim\limits_{n \to \infty} \frac{1}{n} \ch[X_{[0, n - 1]}][Y_{[0, n - 1]}]
        \end{equation}
        will denote, respectively, the \textit{entropy rate} of $\mb{X}$ and the \textit{relative entropy rate} of $\mb{X}=\proc[X]$ with respect to $\mb{Y}=\proc[Y]$. By the stationarity of $\mb{X}$, $\ph[X] = \lim\limits_{n\to\infty}\ch[X_0][X_{[-n, -1]}]$. Note that both limits in~\eqref{zwykla} exist due to the subadditivity of appropriate sequences.
        \begin{rema}\label{rem11}
           Sometimes it is convenient to extend the classical definition of the conditional entropy, $\ch[X][Y]$,  to $\ch[X][\mc{G}]$, where $X$ is a finitely-valued random variable and $\mc{G} \subset \mc{F}$ is a sub-$\sigma$-algebra (see \cite{MR1958753}, Chapter $14$,  for a precise construction and proofs). This extension is justified by the following facts. If $\mc{G} =\sigma(Y)$ then $\ch[X][\sigma(Y)] = \ch[X][Y]$ for any random variable $Y$.\footnote{Recall that $\sigma(Y)$ stand for the smallest $\sigma$-algebra making $Y$ measurable.} If $\mc{H}\subset \mc{G}\subset\mc{F}$ are sub-$\sigma$-algebras then $\ch[X][\mc{G}] \le \ch[X][\mc{H}]$. Moreover, if $\mc{G}_n \searrow \mc{G}$ or $\mc{G}_n \nearrow \mc{G}$ then $\ch[X][\mc{G}_n] \nearrow \ch[X][\mc{G}]$ or $\ch[X][\mc{G}_n] \searrow \ch[X][\mc{G}]$, respectively. Thus, for example, it makes sense to write $\ph[X] = \ch[X_0][X_{(-\infty, - 1]}] = \lim\limits_{n\to\infty} \ch[X_0][X_{(-n, - 1]}]$. The chain rule is still valid, namely,  if $X$ and $Y$ are finitely-valued then 
           \begin{equation}\label{chr}
           \ch[X, Y][\mc{G}] =  \ch[X][\mc{G}] + \ch[Y][\sigma(\mc{G}, \sigma(X))].
           \end{equation}
           Furthermore, $\ch[X][\mc{G}] = 0$ if and only if $X$ is $\mc{G}$-measurable and $\ch[X][\mc{G}] = \h[X]$ if and only if $X$ is independent of $\mc{G}$. 
        \end{rema}
        \begin{rema}\label{ram12}
           We will often omit some technicalities concerning events of zero probability. First, we tacitly assume that $\mc{F}$ is complete (i.e.\ all subevents of zero-measure events are measurable). Secondly, when considering sub-$\sigma$-fields associated with random processes, we think of them as of \textbf{measure-$\sigma$-algebras} (intuitively, we look at them "up to events of probability zero"). Given sub-$\sigma$-fields $\mc{G},\mc{H}\subset \mc{F}$, sometimes we will write
           \begin{equation*}\label{definition of measure subset}
              \mc{G}\tos[\subset][\Pro] \mc{H}
           \end{equation*}
           to stress that for every $G \in \mc{G}$ there is $H\in\mc{H}$ such that $\pof[G\triangle H] = 0$ but not necessarily $\mc{G}\subset \mc{H}$ (with obvious modifications for $\tos[\supset][\Pro]$ and $\tos[=][\Pro]$). However, in most cases, we will skip such considerations, cf.\ the last sentence of the previous remark.
        \end{rema}
        Given processes $\mb{X} = \proc$ and $\mb{Y} = \proc[Y]$, we will be interested in the entropy rate $\ph[\mb{X}\cdot \mb{Y}]$ of their product $\mb{X}\cdot \mb{Y} = (X_i \cdot Y_i)_{i\in\Z}$.
        Our \textbf{standing assumptions} (unless stated otherwise) will be that:
        \begin{enumerate}[label={(\roman*)}]
           \item $\mb{X}$ is finitely-valued, $\mb{Y}$ is binary ($Y_i\in \{0,1\}$ for $i\in\Z$) and $\Pro(Y_0=1)>0$, \label{RAZ}
           \item $\mb{X}\amalg \mb{Y}$, i.e.\ $\mb{X}$ and $\mb{Y}$ are independent.\label{DWA}
        \end{enumerate}
        Notice that by the independence of $\mb{X}$ and $\mb{Y}$, process $(\mb{X}, \mb{Y})$ is stationary. Moreover, $\mb{X}\cdot \mb{Y}$ is a factor of $(\mb{X}, \mb{Y})$.\footnote{Cf.\ Remark~\ref{rema:factor}.}
        The quantity $\cph[\mb{X}\cdot \mb{Y}][Y]$ turns out to be easier to deal with than $\ph[\mb{X}\cdot \mb{Y}]$. A particular emphasis will be put on the case when $\h[\mb{Y}]=0$ in which $\ch[\mb{X}\cdot \mb{Y}][\mb{Y}]=\h[\mb{X}\cdot\mb{Y}]$\footnote{To see this, it suffices to notice that given a process $\mb{Z}$, we have $\frac{1}{n}\ch[Z_{[0, n]}][Y_{[0, n]}] \le \frac{1}{n}\h[Z_{[0, n]}] =  \frac{1}{n}\ch[Z_{[0, n]}][Y_{[0, n]}] + \frac{1}{n}\h[Y_{[0, n]}]$. In particular, we can take $\mb{Z}=\mb{X}\cdot \mb{Y}$.} and $\ph[X\cdot Y] \le \ph[X]$.\footnote{Using the fact that factors cannot increase entropy and by the subadditivity of entropy rate, we have $\ph[X\cdot Y]\le \ph[(X, Y)] \le \ph[X] + \ph[Y]$.}

        Let $\mb{R}=\mb{R}(\mb{Y})=\proc[R]$ be the \textit{return process}, i.e.\ the process of consecutive {\it arrival times} of $\mb{Y}$ to 1:
        \begin{equation}\label{procesR}
           R_i=\begin{cases}
              \inf\{j \ge 0 : Y_j = 1\},        & i=0,      \\
              \inf\{j\ge R_{i - 1} : Y_j = 1\}, & i\geq 1,  \\
              \sup\{j < R_{i + 1} : Y_j = 1\},  & i\leq -1.
           \end{cases}
        \end{equation}
        Note that, in general, $\mb{R}$ can be countably-valued. If $\mb{Y}$ is ergodic then it visits 1 infinitely often, both in the future and in the past and, thus, $\mb{R}$ is well-defined almost everywhere. However, we don't need to assume the ergodicity of $\mb{Y}$ to be able to speak of $\mb{R}$ and we will just assume that:
\begin{enumerate}[label={(\roman*)}]
\setcounter{enumi}{2}
\item\label{TRZY}
        $\mb{Y}$ is such that the definition of $\mb{R}$ makes sense.
\end{enumerate}        
Whenever \ref{RAZ}, \ref{DWA} and \ref{TRZY} hold, we will say that the pair $(\mb{X},\mb{Y})$ is {\em good}. If $\mb{Y}$ is binary, with $\mathbb{P}(Y_0=1)>0$ and such that~\ref{TRZY} holds, we will say that $\mb{Y}$ is {\em good}.
        \begin{rema}

           We will use lowercase letters to denote \textit{realizations} of the corresponding random processes (denoted by uppercase letters). Recall that $\mb{x}=\proc[x]$ is a realization of $\mb{X} =\proc$ if there exists $\omega \in \Omega$ such that $x_i = X_i(\omega)$ for all $i\in\Z$. Moreover, we will tacitly assume that $\omega$ belongs to some "good" subset of $\Omega$ of probability $1$. For example, for $\mb{R}$, our standing assumption will be that $\omega$ realizing $\mb{r}$ belongs to the set where $\mb{Y}$ visits $1$ infinitely often in both directions. In general, if some property of a process $\mb{X}$ has probability $1$, then realization $\mb{x}$ inherits it. For example, if we consider $\mb{Y}$ under $\Pro_{Y_0 = 1}$ then every realization $\mb{y}$ will satisfy $y_0 = 1$.
        \end{rema}

        Here is our main technical result:
        \begin{theo}[answer to Question~\ref{Q:procesy}\ref{qa1}]\label{main}
           Suppose that $(\mb{X},\mb{Y})$ is {\em good}. Then
           \begin{enumerate}[label={(\Alph*)}]
              \item
                    $\ch[\mb{X}\cdot\mb{Y}][\mb{Y}]= \pof[Y_0 = 1]\E_{Y_{0} = 1} \ch[X_0][X_{\{r_{-1}, r_{-2}, \cdots\}}]|_{r_{-i}=R_{-i}}$.\label{mainA}
           \end{enumerate}
           If additionally $\mb{Y}$ is ergodic then
           \begin{enumerate}[label={(\Alph*)}]
           \setcounter{enumi}{1}
           \item\label{jeszcze_jeden}
            $\ch[\mb{X}\cdot\mb{Y}][\mb{Y}]= \h[\mathbf{X}] - \Pro(Y_0=1) \E_{Y_0=1} \ch[X_{[1,r_1-1]}][X_{(-\infty,0]}, X_{\{r_1,r_2,\ldots\}}]|_{r_i=R_i}$.         
           \end{enumerate}           
        \end{theo}
        
        \begin{rema}
           The above expectations are to be understood in the following way:
           \begin{itemize}
              \item
                    we compute $\ch[X_0][X_{\{r_{-1}, r_{-2}, \cdots\}}]$ or $\ch[X_{[1,r_1-1]}][X_{(-\infty,0]}, X_{\{r_1,r_2,\ldots\}}]$ for all realizations $\mb{r} = \proc[r]$ thus obtaining a function $f(\mb{r})$ depending on $\mb{r}$;
              \item
                    we find $\E_{Y_0 = 1} f(\mb{R})$.
           \end{itemize}

        \end{rema}

\subsection{Consequences of the main technical result}\label{longer}
    Clearly, Theorem~\ref{main} gives an answer to Questions~\ref{Q:procesy}\ref{qa1} and~\ref{Q1}\ref{a}. We will say now how it is related to Questions~\ref{Q:procesy}\ref{qa2}, \ref{Q1}\ref{b}, \ref{Q:procesy}\ref{qa3} and \ref{Q1}\ref{c}. The details and longer proofs are included in Section~\ref{here_are_proofs}.

   \subsubsection{Answer to Questions~\ref{Q:procesy}\ref{qa2} and \ref{Q1}\ref{b}}
 
    Notice first that
        \[
           \h[\mb{X}]\leq \ch[X_0][X_{\{r_{-1}, r_{-2}, \cdots\}}]\leq \h[X_0]
        \]
        for each choice of negative integers $\dots<r_{-2}<r_{-1}<0$. Therefore, by Theorem~\ref{main}~\ref{mainA}, we obtain immediately the following:
        \begin{coro}[positive answer to Question~\ref{Q:procesy}\ref{qa2}]\label{C4}
           Suppose that $(\mb{X},\mb{Y})$ is {\em good}. Assume additionally that $\h[\mb{Y}]=0$. Then
           $$
              \pof[Y_0=1]\h[\mb{X}]\leq \h[\mb{M}]\leq \pof[Y_0=1]\h[X_0].
           $$
           In particular,
           \begin{equation}\label{answ}
              \h[\mb{M}]>0 \text{ whenever }\h[\mb{X}]>0.
           \end{equation}
        \end{coro}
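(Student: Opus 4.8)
The plan is to derive Corollary~\ref{C4} directly from Theorem~\ref{main}~\eqref{mainA} together with the elementary two-sided bound
\[
   \h[\mb{X}]\leq \ch[X_0][X_{\{r_{-1}, r_{-2}, \cdots\}}]\leq \h[X_0]
\]
already recorded in the excerpt. First I would justify this bound: the upper inequality is just the fact that conditioning cannot increase entropy, $\ch[X_0][\mc{G}]\leq \h[X_0]$; the lower inequality follows because $\{r_{-1},r_{-2},\dots\}\subset(-\infty,-1]$, so $X_{\{r_{-1},r_{-2},\dots\}}$ generates a sub-$\sigma$-algebra of $\sigma(X_{(-\infty,-1]})$, and by the monotonicity property in Remark~\ref{rem11} ($\mc{H}\subset\mc{G}$ gives $\ch[X_0][\mc{G}]\leq\ch[X_0][\mc{H}]$) together with $\ph[X]=\ch[X_0][X_{(-\infty,-1]}]$ (also from Remark~\ref{rem11}), we get $\ph[X]\leq\ch[X_0][X_{\{r_{-1},r_{-2},\dots\}}]$. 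Note the bound holds for \emph{every} realization $\mb{r}$, uniformly.

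The key observation is that under the additional hypothesis $\h[\mb{Y}]=0$, the footnote following the standing assumptions gives $\ch[\mb{X}\cdot\mb{Y}][\mb{Y}]=\h[\mb{X}\cdot\mb{Y}]=\h[\mb{M}]$. Therefore Theorem~\ref{main}~\eqref{mainA} reads
\[
   \h[\mb{M}]=\pof[Y_0=1]\,\E_{Y_0=1}\,\ch[X_0][X_{\{r_{-1},r_{-2},\dots\}}]\big|_{r_{-i}=R_{-i}}.
\]
Next I would integrate the uniform pointwise bound against the (probability) measure $\Pro_{Y_0=1}$: since $\h[\mb{X}]\leq f(\mb{r})\leq\h[X_0]$ holds for each realization, applying $\E_{Y_0=1}$ preserves the inequalities and yields $\h[\mb{X}]\leq\E_{Y_0=1}f(\mb{R})\leq\h[X_0]$. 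Multiplying through by the nonnegative constant $\pof[Y_0=1]$ gives exactly
\[
   \pof[Y_0=1]\,\h[\mb{X}]\leq\h[\mb{M}]\leq\pof[Y_0=1]\,\h[X_0],
\]
which is the displayed assertion of the corollary.

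Finally, for the strict positivity statement~\eqref{answ}, I would simply read off the lower bound: if $\h[\mb{X}]>0$ and $\pof[Y_0=1]>0$, then $\h[\mb{M}]\geq\pof[Y_0=1]\,\h[\mb{X}]>0$ as a product of two strictly positive numbers. I do not anticipate a genuine obstacle here, since the heavy lifting is done by Theorem~\ref{main}; the only point requiring mild care is making sure the pointwise bound is legitimately uniform over realizations $\mb{r}$ so that it survives the expectation, but this is immediate because the constants $\h[\mb{X}]$ and $\h[X_0]$ do not depend on $\mb{r}$.
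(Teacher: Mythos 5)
Your proposal is correct and follows essentially the same route as the paper: the two-sided pointwise bound $\h[\mb{X}]\leq \ch[X_0][X_{\{r_{-1},r_{-2},\dots\}}]\leq \h[X_0]$ (valid for every realization $\mb{r}$), combined with Theorem~\ref{main}~\eqref{mainA} and the identification $\ch[\mb{X}\cdot\mb{Y}][\mb{Y}]=\h[\mb{M}]$ when $\h[\mb{Y}]=0$, is exactly how the paper derives Corollary~\ref{C4}. Your justification of the lower bound via monotonicity of conditional entropy under inclusion of $\sigma$-algebras is the correct (and only) point needing any argument, and you handle it properly.
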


        \begin{rema}
           The lower bound in Corollary~\ref{C4} is attained for exchangeable processes (see Proposition~\ref{lowerboundforex}), whereas the upper bound is attained for i.i.d.\ processes. If $\mb{X}$ is a Markov chain (which is not i.i.d.), both inequalities are strict, see Section~\ref{LM}.
        \end{rema}
        \begin{rema}[positive answer to Question~\ref{Q1}\ref{b}]
           Implication~\eqref{answ} means, in particular, that the answer to Question~\ref{Q1}\ref{b} is positive, whenever $\nu_\eta\neq\delta_{(\ldots 0,0,0\ldots)}$. In Section~\ref{erg4b} we present an alternative ergodic-theoretic approach to this problem. The proof presented therein is much shorter, on the other hand it addresses directly Question~\ref{Q1}\ref{b}, without providing any explicit formulas.
        \end{rema}

        \begin{rema}\label{rema1.8}
           If one drops the assumption that $\mb{X} \amalg \mb{Y}$ then the situation changes completely and one can get $\h[\mb{M}] = 0$ (with $\h[\mb{X}]>0$ and $\pof[Y_0=1]>0$). To see how far this can go, consider
           \[
              \mb{X} = \mb{Z}\cdot \mb{W} \text{ and }\mb{Y} = \mb{1 - W} = \br*{1 - W_i}_{i \in \Z},
           \]
           where
           \[
              \mb{Z} \amalg \mb{W}, \ph[W] = 0\text{ and }\mathbb{P}(W_0=0)\cdot\mathbb{P}(W_0=1)>0.
           \]
           Then $\mb{M}$ is a trivial zero process, in particular, we have $\h[\mb{M}] = 0$. On the other hand, by Corollary~\ref{C4}, $\ph[X] = \h[\mb{Z}\cdot \mb{W}] > 0  = \h[\mb{W}] = \h[\mb{Y}]$. Cf.\ also Section~\ref{ergo} for more examples of ergodic-theoretic flavour.
        \end{rema}

\subsubsection{Answer to Questions \ref{Q:procesy}\ref{qa3} and \ref{Q1}\ref{c}}
Answers to Questions \ref{Q:procesy}\ref{qa3} and \ref{Q1}\ref{c} are more complex and they are related to the notion of a bilaterally deterministic process.
        \begin{defi}
           We say that a stationary process $\mb{Z} = \left(Z_i\right)_{i \in \Z}$ is \textit{bilaterally deterministic}  if, for all $k\in\N$,
           \begin{equation*}\label{definition bilaterally deterministic process}
              \ch[Z_{[0,k]}][Z_{(-\infty, -1]},Z_{[k+1, \infty)}] = 0.
           \end{equation*}
        \end{defi}
        \begin{rema}\label{rema1.10}
           The notion of a bilaterally deterministic process was introduced by Ornstein and Weiss~\cite{Ornstein_1975}, in terms of the following (double) tail sigma-algebra:
           \[
              \ts[d] := \bigcap_{n\geq1} \sigma\left(Z_{(-\infty, -n]}, Z_{[n, \infty)}\right).
           \]
	  Notice that the following conditions are equivalent:
           \begin{itemize}
              \item
                    $\mb{Z}$ is bilaterally deterministic,
              \item
                    $Z_{[-k,k]}\in \ts[d] \text{ for each }k\geq 1$,
              \item
                    $\sigma(\mb{Z})=\ts[d]$.
           \end{itemize}
           Indeed, e.g., if $\mb{Z}$ is bilaterally deterministic then $\ch[Z_{[0,k]}][Z_{(-\infty, -\ell]},Z_{[k+1 + m, \infty)}]=0$ for any $k,\ell,m\in\N$ and by taking $\ell,m\to\infty$, we easily obtain $Z_{[-k,k]}\in \ts[d] \text{ for each }k\geq 1$. Cf.~also Remark~\ref{rem11}.
           Informally, ``given the far past and the distant future, the present can be reconstructed''~\cite{Ornstein_1975}.
        \end{rema}
        \begin{rema}\label{rema1.12}
           Given  a stationary finitely-valued process $\mb{Z}$, let
           \[
              \ts[p] := \bigcap_{n\geq1} \sigma\left(Z_{(-\infty, -n]}\right), \qquad \ts[f] := \bigcap_{n\geq1} \sigma \left(Z_{[n, \infty)}\right)
           \]
           denote, respectively, the tail $\sigma$-algebra corresponding to the past and to the future. By a celebrated result of Pinsker~\cite{MR0152628}, $\ts[p] \tos[=][\Pro] \ts[f]\tos[=][\Pro]\Pi$, where $\Pi$ denotes the Pinsker $\sigma$-algebra (i.e., the largest zero entropy sub-$\sigma$-algebra). Thus, the following conditions are equivalent (cf.\ Remark~\ref{rema1.10}):
           \begin{itemize}
              \item
                    $\h[\mb{Z}]=0$,
              \item
                    $Z_{[-k,k]}\in \ts[p]$ for each  $k\geq 1$,
              \item
                    $\sigma(\mb{Z})=\ts[p]$.
           \end{itemize}
        \end{rema}
        A direct consequence of Remark~\ref{rema1.10} and Remark~\ref{rema1.12} is the following observation:
        \begin{coro}\label{kabel}
           Suppose that $\h[\mb{Z}]>0$. Then  $\mb{Z}$ is not bilaterally deterministic whenever $\ts[d]=\ts[p]$. In particular, this happens if $\ts[d]$ is trivial. 
        \end{coro}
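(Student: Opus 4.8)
The plan is to argue by contradiction, chaining together the two characterizations recalled in Remarks~\ref{rema1.10} and~\ref{rema1.12}; no fresh computation should be required. First I would suppose that $\mb{Z}$ satisfies the hypotheses $\h[\mb{Z}]>0$ and $\ts[d]\tos[=][\Pro]\ts[p]$ but is nonetheless bilaterally deterministic. By the equivalence recorded in Remark~\ref{rema1.10}, bilateral determinism means precisely $\sigma(\mb{Z})\tos[=][\Pro]\ts[d]$.

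Next, feeding in the hypothesis $\ts[d]\tos[=][\Pro]\ts[p]$, I would deduce $\sigma(\mb{Z})\tos[=][\Pro]\ts[p]$. But Remark~\ref{rema1.12} lists $\sigma(\mb{Z})=\ts[p]$ as equivalent to $\h[\mb{Z}]=0$ (this is the measure-theoretic substance of Pinsker's theorem, via $\ts[p]\tos[=][\Pro]\Pi$ together with $\sigma(\mb{Z})=\Pi$ exactly when $\mb{Z}$ carries no entropy). This would contradict the standing assumption $\h[\mb{Z}]>0$, forcing the conclusion that $\mb{Z}$ cannot be bilaterally deterministic, which is the first assertion.

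For the ``in particular'' clause I would first record the inclusion $\ts[p]\tos[\subset][\Pro]\ts[d]$: for every $n$ one has $\sigma(Z_{(-\infty,-n]})\subset\sigma(Z_{(-\infty,-n]},Z_{[n,\infty)})$, and intersecting over $n$ preserves the inclusion. Consequently, if $\ts[d]$ is trivial (every event of probability $0$ or $1$), then the smaller $\sigma$-algebra $\ts[p]$ is trivial as well, whence $\ts[d]\tos[=][\Pro]\ts[p]$ and the first part of the corollary applies verbatim.

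The only point demanding genuine care is that every $\sigma$-algebra identity here is to be read as an equality of \textbf{measure-$\sigma$-algebras}, i.e.\ modulo $\Pro$-null sets, in the sense fixed in Remark~\ref{ram12}; with that convention both the inclusion $\ts[p]\tos[\subset][\Pro]\ts[d]$ and the passage from triviality of $\ts[d]$ to triviality of $\ts[p]$ are immediate. I do not anticipate a real obstacle, since the substantive input—Pinsker's theorem and the two tail-field characterizations—has already been isolated in the preceding remarks, and the corollary merely splices their equivalences together.
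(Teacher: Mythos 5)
Your argument is correct and coincides with the paper's own (the corollary is stated there as a direct consequence of Remarks~\ref{rema1.10} and~\ref{rema1.12}): bilateral determinism gives $\sigma(\mb{Z})\tos[=][\Pro]\ts[d]\tos[=][\Pro]\ts[p]$, which by Pinsker's theorem forces $\h[\mb{Z}]=0$, a contradiction. Your observation that $\ts[p]\tos[\subset][\Pro]\ts[d]$, so that triviality of $\ts[d]$ implies $\ts[d]\tos[=][\Pro]\ts[p]$, settles the ``in particular'' clause exactly as intended.
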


        Notice that from this point of view, stationary processes can be split into three pairwise disjoint classes: 
\begin{enumerate}[label={(\alph*)}]
\item\label{CA}
of zero entropy rate (they are automatically bilaterally deterministic),
\item\label{CB}
of positive entropy rate that are bilaterally deterministic,
\item\label{CC}
of positive entropy rate but not bilaterally deterministic.
\end{enumerate}
           Class~\ref{CC} includes the following positive entropy rate processes:
           \begin{itemize}
              \item {exchangeable processes},
              \item Markov chains,
              \item weakly Bernoulli processes (here $\ts[d]$ is trivial),
           \end{itemize}
for more details, see Section~\ref{przyklady}. Theorem~\ref{main} allows us to ``compare'' a large subclass of processes from class~\ref{CA} with processes from class~\ref{CC}, see Corollaries~\ref{WWW} and~\ref{VVV} below. In particular, the zero entropy class that we have in mind contains {\bf all} $\mathscr{B}$-free systems (considered with the Mirsky measure), cf.\ Proposition~\ref{dychotomy} and Corollary~\ref{wolne}. We leave as an open problem to find answers to analogous questions on the relations between class~\ref{CA} with class~\ref{CB}.

Notice that
        \begin{eqsn}
           &\E_{Y_0=1}\ch[X_{[1,r_1-1]}][X_{(-\infty,0]},X_{\{r_1,r_2,\dots\}}]_{|r_i = R_i} \geq \E_{Y_0=1}\ch[X_{[1,r_1-1]}][X_{(-\infty,0]},X_{[r_1,\infty)}]_{|r_1 = R_1}\\
           &=\sum_{k\geq 1}\pof[R_1=k+1][Y_0=1]\ch[X_{[1,k]}][X_{(-\infty,0]},X_{[k+1,\infty)}].
        \end{eqsn}
        Moreover, if $\mb{X}$ fails to be bilaterally deterministic, then, for all $k$ sufficiently large, we have
        \begin{equation}\label{dodatnie}
           \ch[X_{[1,k]}][X_{(-\infty,0]},X_{[k+1,\infty)}]>0.
        \end{equation}
        Thus, using Theorem~\ref{main}\ref{jeszcze_jeden}, we obtain immediately the following:
        \begin{coro}[answer to Question~\ref{Q:procesy}\ref{qa3}]\label{WWW}
        Suppose that $(\mb{X},\mb{Y})$ is {\em good }and $\mb{Y}$ is ergodic of zero entropy rate. If additionally 
\begin{equation}\label{nieskoczenie_wiele}
\pof[R_1=k]>0 \text{ for infinitely many }k\in\N
\end{equation}         
and $\mb{X}$ is not bilaterally deterministic then $\h[\mb{M}]  < \h[\mb{X}]$.
        \end{coro}
\begin{rema}\label{bernu}
In fact, if we know more about $\mb{X}$ than just~\eqref{dodatnie} then the assumption that $\pof[R_1  = k] > 0$ for infinitely many $k\in \N$ can be relaxed and we can still have $\h[\mb{M}]  < \h[\mb{X}]$. E.g.\ if $\mb{X}$ is Bernoulli then we will always have $\h[\mb{M}]<\h[\mb{X}]$ whenever $(\mb{X},\mb{Y})$ is good and $\mb{Y}$ is of zero entropy rate.
\end{rema}

A natural question arises what happens when~\eqref{nieskoczenie_wiele} fails to hold. Suppose that our processes are of dynamical origin and the underlying dynamical system is a transitive symbolic dynamical system. Namely, take $\mb{w}\in \{0,1\}^\Z$ such that the support of $\mb{w}$ is unbounded both from below and from above, and suppose that $\mb{w}$ is quasi-generic along some subsequence for an invariant zero entropy measure $\nu$. Let $Y$ be the orbit closure of $\mb{w}$ under the left shift~$S$ and let $\mb{Y}\sim \nu$ be the corresponding stationary process. Clearly,
\[
\eqref{nieskoczenie_wiele} \implies \text{ the support of $\mb{w}$ contains a two-sided infinite arithmetic progression}.
\]  
It turns out that if we assume that the support of $\mb{w}$ does contain a two-sided (infinite) arithmetic progression then one can obtain a complementary result to~Corollary \ref{WWW}:
\begin{coro}\label{VVV}
Let $\mb{Y}$ be a {\em good}, ergodic process of zero entropy rate. Assume additionally that there exists $L\geq 1$ such that  for a.e.\ realization $\mb{y}$, the corresponding return time sequence $\mb{r}$ contains an arithmetic progression of difference $L$. Then there exists a stationary binary process $\mb{X}$ that is not bilaterally deterministic and such that $\h[\mb{M}]=\h[\mb{X}]$.
\end{coro}

Let us turn now to the interpretation of Corollaries~\ref{WWW} and \ref{VVV} from the point of view of $\mathscr{B}$-free systems. It turns out that in this case we have the following dychotomy:
\begin{prop}\label{dychotomy}
Let $\mathscr{B}\subset \mathbb{N}$ and let $\eta$ be the characteristic function of the corresponding $\mathscr{B}$-free set. Then exactly one of the following holds:
\begin{itemize}
\item $(X_\eta,S)$ is proximal and then for infinitely many $k\geq 1$ the block of the form $10\ldots 01$ (with $k$ zeros between the 1's) is of positive Mirsky measure $\nu_\eta$,
\item $(X_\eta,S)$ is not proximal and then $\eta$ contains a two-sided infinite arithmetic progression.
\end{itemize}
\end{prop}
As a direct consequence of Corollaries~\ref{WWW},~\ref{VVV} and Proposition~\ref{dychotomy}, for $\mathscr{B}$-free systems we have the following result:
\begin{coro}\label{wolne}
Let $\mathscr{B}\subset \mathbb{N}$. Then $(X_\eta,S)$ is proximal if and only if for any $\mb{X}$ that is not bilaterally deterministic, such that $\mb{X}\amalg \mb{Y}$, we have $\h[\mb{M}]<\h[\mb{X}]$.
\end{coro}

Finally, let us remark that $\mb{X}$ in Corollary~\ref{VVV} can be chosen to be very weakly Bernoulli (i.e.\ as a dynamical system, isomorphic to a Bernoulli process~\cite{MR346132}) (compare Example~\ref{i} below and Remark~\ref{bernu}). That is, for $\mb{Y}$ as in Corollary~\ref{VVV}, we can find a measure-theoretic dynamical system $(X,\mathcal{B},\mu,T)$ with two stochastic representations $\mb{X}$ and $\mb{X}'$ (both not bilaterally deterministic!) such that $\h[\mb{X}\cdot \mb{Y}]<\h[\mb{X}]=\h[\mb{X}']=\h[\mb{X}'\cdot \mb{Y}]$. More than that, in some cases $\mb{X}'$ can be retrieved from $\mb{X}'\cdot \mb{Y}$. This matches well with the fact that the notion of a bilaterally deterministic process is not stable under taking various process representation of a given dynamical system~\cite{Ornstein_1975}. It makes the situation completely different from the one in~\cite{Fu-Pe-We}, where the results are purely ergodic-theoretic.

    \subsection{Dictionary between ergodic theory and probability theory}

        In our paper, both ergodic-theoretic and stochastic questions and tools are often intertwined. Let us now give some samples of ergodic-theory results translated into the language of stochastic processes. Our basic object is an ergodic-theoretic dynamical system $(\mc{X}^\Z, \mu, S)$, where $S$ stands, as usual, for the left shift, together with a subset $A \subset X$ satisfying $\mu(A) > 0$. Recall that for $x\in A$, the \textit{first return time} $n_A$ is defined as $n_A(x) = \inf\left\{n\ge 1\;|\; S^n x \in A\right\}$ and the corresponding \textit{induced transformation} as $S_A(x) = S^{n_A(x)}(x)$, with the corresponding conditional measure $\mu_A$ being invariant under $S_A$.
	
        Fix now a stationary process $\mb{X} = \proc$ on $(\Omega,\mathcal{F},\mathbb{P})$, with distribution $\mu$, i.e.\ $\mb{X}\sim \mu$. This is a stochastic counterpart of $(\mc{X}^\Z,\mu,S)$, cf.\ also Section~\ref{se:fur}. Left shift $S$ naturally acts on processes by $S \mb{X} = \left(X_{i + 1}\right)_{i \in \Z}$. In particular,
        \[
           S_\ast(\mu) = \mu \text{ precisely if }S\mb{X} \sim \mb{X}.
        \]
        Similarly, $\mu_A$ corresponds to the distribution of $\mb{X}$ \textbf{under} $\Pro_{\mb{X} \in A}$.  To see how one should interpret $S_A$ in terms of stochastic processes, let $R_{A} = \inf\left\{n \ge 1\;|\; S^n\mb{X} \in A\right\}$ be the first return time, defined on $\mb{X}\in A$, cf.\ \eqref{procesR}. Now, we set $S_A \mb{X} = \left(X_{i + R_A}\right)_{i \in \Z}$ and one can easily check that
        \[
           S_A \mu_A  = \mu_A  \text{ precisely if }S_A \mb{X} \sim \mb{X} \text{ under }\Pro_{\mb{X} \in A}.
        \]
	Finally, recall that $h(\mu)=\h[\mb{X}]$.
        
        Let us present a summary of some classical ergodic theorems (formulated for $(\mc{X}^\Z,\mu,S)$), with their counterparts for random processes. \\
        \noindent\begin{tikzpicture}
           \matrix (first) [table]
           {
           & Ergodic    &  Probabilistic\\
           Ergodicity of $\mu$\footnotemark{}& $\frac1n\sum_{i=0}^{n-1}S^i f \to \int f\, d\mu$ & \small{$\frac1n\sum_{i=0}^{n-1}f(S^{i}\mb{X})  \to \E f(\mb{X})$} \\
           Poincar\'{e} Rec.& $\mu_{A}\left(\{x : S^kx \in A \textnormal{ i.o.}\}\right) = 1$ & $\pof[S^k\mb{X} \in A\textnormal{ i.o.}][\mb{X} \in A] = 1$\\
           Kac's Lemma	&  \small{$\int_A n_A d\mu_A  =1$}     & \small{$\pof[\mb{X} \in A] \E_{\mb{X} \in A} R_A = 1$}\\
           Invariance of $\mu_A$ & \small{$S_A\mu_A  =\mu_{A}$ } & \small{$S_A\mb{X} \sim \mb{X}$, under $\Pro_{\mb{X} \in A}$}\\
           Ergodicity of $\mu_A$ & $\frac1n\sum_{i=0}^{n-1}S^i f \to \int f\, d\mu_A$& \small{$\frac1n\sum_{i=0}^{n-1}f(S_A^{i}\mb{X})  \to \E_{\mb{X}\in A} f(\mb{X})$}\\
           Maker's ET\footnotemark{} & \small{$\frac1n\sum_{i=0}^{n-1}S^{i}f_{n - i}  \to \int f d\mu$} & \small{$\frac1n\sum_{i=0}^{n-1}f_{n - i}(S^{i}\mb{X})  \to \E f(\mb{X})$} \\
           };
        \end{tikzpicture}
        \addtocounter{footnote}{-2}
        \stepcounter{footnote}\footnotetext{Here, in fact, we state Birkhoff ergodic theorem under the assumption that $\mu$ is ergodic.}
        \stepcounter{footnote}\footnotetext{ET stands for ``ergodic theorem''.}
        
       We owe the reader a word of explanation concerning the abbreviations in the table above. The convergence of ergodic averages is always meant a.e.\ / a.s.\ with respect to the appropriate underlying measure ($\mu$ or $\mu_A$ / $\Pro$ or $\Pro_{\mb{X} \in A}$). Also, we tacitly assume that all required assumptions are satisfied, e.g.\ functions appearing in ergodic averages are integrable with respect to the underlying measure. Finally, let us give some details concerning Maker's ergodic theorem~\cite{Maker_1940} which will play a central role in the proof of Theorem~\ref{main}~\ref{mainA}. We recall it now (in the ergodic-theoretic language, i.e.\ as in~\cite{Hochman-notes}, under the extra assumption that $T$ is ergodic).
        \begin{theo}[Maker's ergodic theorem]
           Let $(X,\mu,T)$ be an ergodic measure-theoretic dynamical system. Let $f\in L_1(\mu)$ and $f_n\to f$ $\mu$-a.e. Suppose that $\sup_n |f_n|\in L_1(\mu)$. Then
           \[
              \frac1n\sum_{i=0}^{n-1}T^if_{n-i} \to \E_\mu f \text{ a.e.}
           \]
        \end{theo}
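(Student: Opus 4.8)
\emph{Proof proposal.} The plan is to deduce the statement from Birkhoff's ergodic theorem, which I take for granted, by showing that the Maker average differs from the ordinary Birkhoff average of the limit $f$ by a quantity vanishing almost everywhere. Writing $g:=\sup_n|f_n|\in L_1(\mu)$ and reading $(T^if_{n-i})(x)=f_{n-i}(T^ix)$, it suffices to prove
\[
  \frac1n\sum_{i=0}^{n-1}\bigl(f_{n-i}-f\bigr)(T^ix)\to 0\quad\text{as }n\to\infty,\ \text{a.e.},
\]
since by Birkhoff $\frac1n\sum_{i=0}^{n-1}f(T^ix)\to\int f\,d\mu$ a.e.

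The key device is the tail supremum $g_N:=\sup_{m\ge N}|f_m-f|$. Because $f_m\to f$ a.e., the sequence $(g_N)_N$ decreases to $0$ a.e., and since $g_N\le g+|f|=:h\in L_1(\mu)$, dominated convergence gives $\int g_N\,d\mu\to 0$ as $N\to\infty$. I would then fix $N$, take $n>N$, and split $\{0,\dots,n-1\}$ according to whether the subscript $n-i$ is at least $N$ (the bulk, $i\le n-N$) or smaller (the boundary, i.e.\ the last $N-1$ indices $i=n-N+1,\dots,n-1$). On the bulk $|f_{n-i}-f|\le g_N$, whereas on the boundary one has only the crude bound $|f_{n-i}-f|\le h$. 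This yields
\[
  \frac1n\sum_{i=0}^{n-1}|f_{n-i}-f|(T^ix)\le\frac1n\sum_{i=0}^{n-1}g_N(T^ix)+\frac1n\sum_{i=n-N+1}^{n-1}h(T^ix).
\]

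For the first term, Birkhoff applied to $g_N$ gives $\frac1n\sum_{i=0}^{n-1}g_N(T^ix)\to\int g_N\,d\mu$ a.e. For the boundary term I would write it as a difference of Birkhoff averages of $h$,
\[
  \frac1n\sum_{i=n-N+1}^{n-1}h(T^ix)=\frac1n\sum_{i=0}^{n-1}h(T^ix)-\frac{n-N+1}{n}\cdot\frac1{n-N+1}\sum_{i=0}^{n-N}h(T^ix),
\]
observing that both averages converge a.e.\ to $\int h\,d\mu$ while $\tfrac{n-N+1}{n}\to1$, so the difference tends to $0$ a.e.\ for each fixed $N$. Hence $\limsup_n\frac1n\sum_{i=0}^{n-1}|f_{n-i}-f|(T^ix)\le\int g_N\,d\mu$ on a set of full measure; intersecting the relevant full-measure sets over the countably many $N$ and letting $N\to\infty$ forces this $\limsup$ to be $0$ a.e., completing the argument.

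The main obstacle is precisely the boundary terms $f_1(T^{n-1}x),f_2(T^{n-2}x),\dots$: they involve the early functions $f_m$, for which the approximation $f_m\approx f$ has not yet taken effect, evaluated at high iterates $T^{n-1}x,\dots$, so they cannot be controlled by pointwise smallness. What rescues the argument is exactly the hypothesis $\sup_n|f_n|\in L_1(\mu)$: it dominates these terms by $h\in L_1$, and Birkhoff then guarantees that summing a fixed integrable function over an asymptotically negligible fraction ($N-1$ out of $n$) of the orbit contributes nothing in the limit. The only remaining care is the bookkeeping of null sets, handled by the countable intersection over $N$.
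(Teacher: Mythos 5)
Your argument is correct. Note first that the paper does not prove this statement at all: Maker's ergodic theorem is quoted as a known result, with a reference to Maker's original paper and to Hochman's lecture notes, so there is no in-paper proof to compare against. Your derivation from Birkhoff's theorem is the standard one and is complete: the reduction to showing $\frac1n\sum_{i=0}^{n-1}(f_{n-i}-f)(T^ix)\to0$, the tail suprema $g_N=\sup_{m\ge N}|f_m-f|$ dominated by $h=g+|f|\in L_1(\mu)$ so that $\int g_N\,d\mu\to0$ by dominated convergence, the bulk/boundary split of the indices according to whether $n-i\ge N$, Birkhoff applied to $g_N$ for the bulk and the difference-of-averages trick for the $N-1$ boundary terms, and the countable intersection of full-measure sets over $N$ all check out. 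You also correctly identify the crux: the early functions $f_1,f_2,\dots$ evaluated at late iterates are exactly what the integrable majorant is needed for, since pointwise convergence gives no control there. The only cosmetic remark is that ergodicity is used only to replace the conditional expectation onto the invariant $\sigma$-algebra by the constant $\int f\,d\mu$ (and likewise for $g_N$ and $h$); the same argument gives the non-ergodic version with $\E_\mu(f\mid\mathcal{I})$ as the limit.
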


        Let us now return to our general setting, with standing assumptions~\ref{RAZ} and~\ref{DWA} on $\mb{X}$ and~$\mb{Y}$.   Consider the \textit{inter-arrival process} $\mb{T} = \proc[T]$, where
        \begin{equation}\label{definitino of inter-arrival}
           T_i = R_i - R_{i - 1}
        \end{equation}
        and the \textit{return-process} $\mb{R}$ is as in \eqref{procesR}. Thus, $T_i$ tells us how much time elapses between $(i-1)$'th and $i$'th visit of $\mb{Y}$ to the state $1$.
        
        \begin{rema}[Factor of a random process]\label{rema:factor}
        	Recall that whenever $\mb{Y}$ is ergodic, the return process $\mb{R}$ and thus also $\mb{T}$ is well-defined. Moreover, $\mb{T}$ can be regarded as a \textbf{factor} of $\mb{Y}$ in the ergodic-theoretic sense. More precisely, by the very definition of $\mb{T}$, there is a natural measurable function $\pi\colon \left(\{0, 1\}^\Z, S_{[1]}, \mc{L}(\mb{X}|\; \Pro_{Y_0 = 1})\right) \to \left(\Z^\Z, S, \mc{L}(\mb{T}\;|\; \Pro_{Y_0 = 1})\right)$ such that $\pi(\mb{X}) = \mb{T}$ almost surely, where $\mc{L}(\cdot \;|\; \cdot)$ stands for the "distribution of $\cdot$ under $\cdot$", $[1] = \{\mb{y}\; |\; y_0 = 1\}$ and $S_{[1]}$ is the corresponding induced shift operator (cf.\ the beginning of this section). Clearly, $\pi S_{[1]} = S \pi$. In particular, since $\mb{Y} \sim S_{[1]}\mb{Y}$ and $\mb{Y}$ is ergodic (under $\Pro_{Y_0 = 1}$), we get that  $ \mb{T}$ is stationary and  ergodic (under $\Pro_{Y_0 = 1}$) as well. 
        \end{rema}

       As a consequence of the above remark, we can apply Maker's ergodic theorem to $\mb{T}$, which results in the following corollary:
        \begin{coro}
           Suppose that $\sup_{i\in\N} g_i(\mb{T}) \in L_1(\Pro_{Y_0 = 1})$ and $g_i\xrightarrow{\Pro_{Y_0 = 1}\;a.s.} g$. Then, $\Pro_{Y_0 = 1}$ a.s.,
           \begin{equation*}
              \lim_{n\to\infty}\frac1n\sum_{i=0}^{n-1}g_{n - i} \left({(T_{i + j})}_{j\in\Z}\right) \tos[=][] \E_{Y_0 = 1} g\left(\mb{T}\right).
           \end{equation*}           
        \end{coro}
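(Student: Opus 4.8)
The plan is to recognize this corollary as nothing more than the probabilistic form of Maker's ergodic theorem, recorded in the table above, applied to the process $\mb{T}$ under the measure $\Pro_{Y_0 = 1}$. The whole argument is a matter of matching notation; there is no genuine analytic difficulty, since the hard work (the ergodicity of $\mb{T}$) has already been done in Remark~\ref{rema:factor}.

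First I would invoke Remark~\ref{rema:factor}, which shows that $\mb{T}$ is stationary and ergodic under $\Pro_{Y_0 = 1}$. Equivalently, the symbolic system $\br*{\Z^\Z, S, \mc{L}(\mb{T}\;|\; \Pro_{Y_0 = 1})}$, with $S$ the left shift, is an ergodic measure-theoretic dynamical system, which is exactly the standing hypothesis under which Maker's ergodic theorem is stated. I would then make the key identification that the shift acts on a realization of $\mb{T}$ by $S^i\mb{T} = (T_{i + j})_{j\in\Z}$, so that the average in the statement can be rewritten as
\[
   \frac1n\sum_{i=0}^{n-1}g_{n - i}\br*{(T_{i + j})_{j\in\Z}} = \frac1n\sum_{i=0}^{n-1}g_{n - i}\br*{S^i\mb{T}},
\]
which is precisely the Maker average $\frac1n\sum_{i=0}^{n-1}S^i f_{n-i}$ with $f_n = g_n$ and $f = g$.

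It then remains to check the three hypotheses of the theorem against our assumptions. The domination $\sup_{i\in\N}\left|g_i(\mb{T})\right|\in L_1(\Pro_{Y_0 = 1})$ is assumed (in the applications the $g_i$ are conditional entropies, hence nonnegative, so that $\sup_i g_i = \sup_i|g_i|$ and the stated hypothesis is exactly this); the $\Pro_{Y_0 = 1}$-a.s.\ convergence $g_i(\mb{T})\to g(\mb{T})$ is assumed; and $g(\mb{T})\in L_1(\Pro_{Y_0 = 1})$ follows from these two by dominated convergence. Applying Maker's ergodic theorem therefore yields, $\Pro_{Y_0 = 1}$ a.s.,
\[
   \frac1n\sum_{i=0}^{n-1}g_{n - i}\br*{S^i\mb{T}} \longrightarrow \E_{Y_0 = 1} g(\mb{T}),
\]
which is the claimed identity. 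The only steps requiring any care, and hence the ``main obstacle'' such as it is, are the translation from the ergodic-theoretic formulation of Maker's theorem to its probabilistic counterpart in the table and the verification that $\mb{T}$ (rather than $\mb{Y}$ itself) is the correct ergodic system to feed into it — both of which are settled by Remark~\ref{rema:factor}.
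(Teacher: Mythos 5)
Your proposal is correct and matches the paper's own treatment: the paper states this corollary as an immediate consequence of Remark~\ref{rema:factor} (stationarity and ergodicity of $\mb{T}$ under $\Pro_{Y_0=1}$) together with Maker's ergodic theorem applied to the shift on realizations of $\mb{T}$, which is exactly the translation you carry out. Your additional remarks on reconciling $\sup_i g_i$ with $\sup_i |g_i|$ and on the integrability of $g(\mb{T})$ are sensible housekeeping that the paper leaves implicit.
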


\section{Examples, comments and proofs}
    \subsection{Examples of non-bilaterally deterministic processes}\label{przyklady}

 In the subsections below we tacitly assume that $(\mb{X},\mb{Y})$ is {\em good}.

        \subsubsection{Exchangeable processes}
           \begin{defi}
              We say that a process $\mb{X}$ is \textit{exchangeable} if for any $n\in \N$ and distinct times $i_1, i_2, \ldots, i_n$,
              \begin{equation*}
                 \left(X_{i_1}, X_{i_2}, \ldots, X_{i_n}\right) \sim \left(X_{1}, X_{2}, \ldots, X_{n}\right).
              \end{equation*}
              In other words, the distribution of $\mb{X}$ is invariant under finite permutations.
           \end{defi}
           \begin{rema}
              Let $\mb{X} = \proc$ be {exchangeable}. By a celebrated result of de Finetti~\cite{finetti31} (cf.\ also \cite{MR76206}), this is equivalent to $\mb{X}$ being a convex combination of i.i.d.\ processes. Thus, there exists a random variable $\Theta$ such that, conditionally on $\Theta$, $\mb{X}$ is i.i.d. Note that this ensures that $\ph[X] > 0$, unless
              $X_i = f_i(\Theta)$ for some Borel functions $f_i$. Indeed, 
              \begin{equation*}
                 \h[X_1, \ldots, X_n] \ge \ch[X_1, \ldots, X_n][\Theta] = \sum_{i = 1}^{n}\ch[X_i][\Theta] = n\ch[X_1][\Theta],
              \end{equation*}
              which gives $\ph[X] \ge \ch[X_1][\Theta]$. Therefore,  $\ph[X] = 0$ implies $X_i = f_i(\Theta)$.
           \end{rema}
           \begin{rema}\label{olshen}
              Olshen in~\cite{MR288797} showed that if $\mb{X} = \proc$ is exchangeable then
              \begin{equation*}
                 \mathcal{I} = \mathcal{E} = \ts[d] = \ts[f] = \ts[p],
              \end{equation*}
              (as measure-algebras),
              where $\mc{I}, \mathcal{E}$ denote the $\sigma$-algebra of shift-invariant and finite permutation invariant sets respectively and $\ts[d]$, $\ts[f]$, $\ts[p]$ are double, future, past tails respectively.
           \end{rema}
           As an immediate consequence of Remark~\ref{olshen} and Corollary~\ref{kabel}, we obtain the following:
           \begin{coro}
              Suppose that $\mb{X}$ is exchangeable. Then $\h[\mb{X}]>0$ if and only if $\mb{X}$ is not bilaterally deterministic.
           \end{coro}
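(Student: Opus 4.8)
The plan is to read off both implications directly from the coincidence of all the relevant $\sigma$-algebras for exchangeable processes recorded in Remark~\ref{olshen}, invoking Corollary~\ref{kabel} for one direction and the zero-entropy characterization of Remark~\ref{rema1.12} for the other. Since $\mb{X}$ is exchangeable it is in particular stationary, and under the standing assumption~\eqref{RAZ} it is finitely-valued; hence all the tail and invariant $\sigma$-algebras $\mc{I},\mc{E},\ts[d],\ts[f],\ts[p]$ are defined and, by Olshen's theorem, coincide as measure-algebras. In particular $\ts[d]=\ts[p]$, which is the hypothesis that feeds both halves of the argument.

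For the forward implication, I would suppose $\h[\mb{X}]>0$. Because $\ts[d]=\ts[p]$ (as measure-algebras) by Remark~\ref{olshen}, Corollary~\ref{kabel} applies verbatim and yields that $\mb{X}$ is not bilaterally deterministic. This is the shorter half and requires nothing beyond citing Corollary~\ref{kabel}.

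For the converse I would argue by contrapositive, showing that $\h[\mb{X}]=0$ forces $\mb{X}$ to be bilaterally deterministic. If $\h[\mb{X}]=0$, then by the equivalences in Remark~\ref{rema1.12} we have $\sigma(\mb{X})=\ts[p]$ as measure-algebras (equivalently, $X_{[-k,k]}\in\ts[p]$ for every $k\geq 1$). Combining this with $\ts[p]=\ts[d]$ from Remark~\ref{olshen} gives $\sigma(\mb{X})=\ts[d]$, and by the equivalences in Remark~\ref{rema1.10} the identity $\sigma(\mb{X})=\ts[d]$ is precisely the statement that $\mb{X}$ is bilaterally deterministic. This closes the iff.

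The only point deserving care — which I would flag as the single genuine step rather than an obstacle — is that every identity here is to be read as an equality of measure-algebras, up to null events, in the sense of Remark~\ref{ram12}: Olshen's identities, the zero-entropy characterization of Remark~\ref{rema1.12}, and the bilateral-determinism characterization of Remark~\ref{rema1.10} are all stated at this level. Consequently the chain $\sigma(\mb{X})=\ts[p]=\ts[d]$ is legitimate, and the corollary follows immediately without any further estimation.
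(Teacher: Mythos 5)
Your proposal is correct and follows essentially the same route as the paper, which derives the corollary directly from Olshen's identity $\ts[d]=\ts[p]$ (Remark~\ref{olshen}) together with Corollary~\ref{kabel} for one direction and the equivalences of Remarks~\ref{rema1.10} and~\ref{rema1.12} for the other. Your write-up merely makes explicit the converse implication that the paper leaves as immediate.
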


           \begin{prop}\label{lowerboundforex}
              Suppose that $\mb{X}$ is exchangeable. Then $\cph[M][Y] =\Pro\left(Y_0 = 1\right) \h[\mathbf{X}]$.
              \begin{proof}
                 It follows from the exchangeability of $\mb{X}$ that for any negative distinct times $ r_{-i}$, $i\in\N$,
                 \begin{equation*}
                    \ch[X_0][X_{\{r_{-1},r_{-2}, \ldots\}}] = \ch[X_0][X_{\{-1, -2, \ldots\}}] =\h[\mb{X}]
                 \end{equation*}
                 It remains to use Theorem~\ref{main}~\ref{mainA}.
              \end{proof}
           \end{prop}

        \subsubsection{Markov chains}\label{LM}
           Recall that a process $\mb{X}$
           is a Markov chain if, for every time $i\in\Z$,
           conditionally on $X_i$, $X_{(-\infty, i - 1]}$ is independent of $X_{[i + 1, \infty)}$. Colloquially, given present, the past and the future are independent. This immediately leads to the following corollary of Theorem~\ref{main}~\ref{mainA}:
           \begin{coro}\label{MA}
              If $\mb{X}$ is a Markov chain (and $(\mb{X},\mb{Y})$ is good) then
              \begin{equation*}
                 \h[\mb{M}]= \pof[Y_0 = 1]\sum_{k = 1}^{\infty}\pof[R_{1}  = k][Y_{0} = 1] \ch[X_k][X_{0}].
              \end{equation*}

           \end{coro}
           \begin{rema}
              Corollary~\ref{MA} easily extends to the case of $k$-Markov chains but for simplicity sake we decided to present it for $k = 1$.
           \end{rema}
           \begin{rema}
              Let $\mb{X} = \proc[X][\N]$ be a finitely-valued Markov chain, $X_i \in \mc{X}$. It is well-known (see \cite{feller2008introduction}, Chapter XV, Section $6$, Theorem $3$, page $392$) that we can uniquely decompose the state space $\mc{X}$ into disjoint union
              \begin{equation}\label{Markov decomposition}
                 \mc{X} = C \sqcup D_1 \sqcup D_2 \sqcup \cdots \sqcup D_k,
              \end{equation}
              where $C$ is the set of transient states and $D_i$ are closed sets. If $\mb{X}$ starts  in $D_j$ (i.e.\ $X_0 \in D_j$) then it remains in $D_j$ forever. If $X_0 \in C$ then $\mb{X}$ stays in $C$ for finite time and jumps to some $D_j$ (and never leaves $D_j$ afterwards). Moreover (see \cite{feller2008introduction}, Chapter XV, Section $7$, Criterion, page $395$), if $\pi$ is a stationary measure then necessarily $\pi(C) = 0$.
           \end{rema}

           Now suppose that a {bilateral}, finitely-valued Markov chain $\mb{X} =\proc$ is stationary (thus, $C = \emptyset$ in \eqref{Markov decomposition}). Fix $1\leq j\leq k$ and let $\mb{X}_{D_j}$ stand for $\mb{X}$ conditioned on $X_0 \in D_j$. By the definition of $D_j$, process $\mb{X}_{D_j}$ is an irreducible (equivalently, ergodic), stationary Markov chain. Now, let $p_j$ be the period of $\mb{X}_{D_j}$. Then $D_j$ can be decomposed into $p_j$ disjoint sets (see \cite{chung1967markov}, Chapter $1$, Section $3$, Theorem $4$)
           \begin{equation*}\label{Markov periodic decomposition}
              D_j = D_{j, 0} \sqcup \cdots \sqcup  D_{j, p_j - 1}
           \end{equation*}
           such that $\pof[X_1 \in D_{j, (\ell + 1)\bmod p_j}\;|\; X_0 \in D_{j, \ell}] = 1$. Using Corollary $2$ from \cite{Markovtail},  we get that
           \begin{equation*}
              \ts[d]\left(\mb{X}_{D_j}\right) =  \ts[p]\left(\mb{X}_{D_j}\right)= \ts[f]\left(\mb{X}_{D_j}\right)=  \sigma\left\{\left\{X_0 \in D_{j, 0}\right\}, \left\{X_0 \in D_{j, 1}\right\}, \ldots, \left\{X_0 \in D_{j, p_j - 1}\right\}\right\}.
           \end{equation*}
           Note that Corollary $2$ from \cite{Markovtail} is stated only for $\ts[f]$ but a perusal of the proofs of Theorem~$1$ and Corollaries~$1$ and $2$ therein gives the same result for $\ts[d]$. Thus, $\mb{X}$, conditionally on $X_0 \in D_{j, l}$, has trivial tail $\sigma$-algebras. This immediately leads to
           \begin{equation}\label{tails of Markov}
              \ts[d]\left(\mb{X}\right) = \ts[p]\left(\mb{X}\right) = \ts[f]\left(\mb{X}\right) = \sigma\left\{\left\{X_0 \in D_{j, \ell}\right\}\;|\;1\le j \le k, 0 \le \ell \le p_j\right\}.
           \end{equation}
           Indeed, if for example $A \in 	\ts[d]\left(\mb{X}\right)$ then, for all $j, \ell$, $\pof[A\;|\;X_0 \in D_{j, \ell}] \in \{0, 1\}$ which yields \eqref{tails of Markov}.
           As a consequence of \eqref{tails of Markov}, we obtain the following:
           \begin{coro}
              Suppose that $\mb{X}$ is a stationary finitely-valued Markov-chain. Then $\h[\mb{X}]>0$ if and only if $\mb{X}$ is not bilaterally deterministic.
           \end{coro}

           \begin{rema}
              Since $\h[\mb{X}]=\ch[X_1][X_0]=\ch[X_{i + 1}][X_i]$, it follows that $\h[\mb{X}]=0$ if and only if, for every $i\in\Z$, $X_i = f_i(X_{0})$ for some functions $f_i$. It is not hard to see that if for every $x\in\mc{X}$, $\pof[X_0 = x] >0$, then every $f_i$ must be a bijection on $\mc{X}$. Moreover, by the stationarity of $\mb{X}$, for $f_1(x) = y$,  we get
              \begin{eqsn}
                 \pof[X_0 = x] &= \pof[X_0 = x, f_1(x) = y] = \pof[X_0 = x, f_1(X_0) = y] \\
                 &= \pof[f_i(X_0) = x, f_{i + 1}(X_0) = y] = \pof[X_0 = f_i^{-1}(x)]\iof[f_{i + 1}\left(f_i^{-1}(x)\right) = f_1(x)].
              \end{eqsn}
              Thus, necessarily, $f_{i + 1}\left(z\right) = f_1(f_i(z))$. Consequently, if we set $f:=f_1$ then $f_{i} = f^{\circ i}$. Moreover, $f$ must be such that, for all $x$, $\pof[X_0 = x] = \pof[X_0 = f(x)]$.

              Therefore, if $\mb{X}$ is bilateral, finitely-valued, stationary Markov chain, with $\pof[X_0 = x] >0$ for all $x \in \mc{X}$, then the following are equivalent:
              \begin{itemize}
                 \item $\mb{X}$ is bilaterally deterministic;
                 \item there exist a bijection $f\colon\mc{X} \to \mc{X}$, such that $X_i = f^{\circ i}(X_0)$ and for all $x\in\mc{X}$, $\pof[X_0 = x] = \pof[X_0 = f(x)]$.
              \end{itemize}
           \end{rema}

        \subsubsection{Weakly Bernoulli processes}
           Weakly Bernoulli processes were introduced by Friedman and Ornstein~\cite{MR274718} and belong to the classics of ergodic theory. Equivalently, one speaks of finitely determined processes. Recall that any process $\mb{X}$ that is weakly Bernoulli is also very weakly Bernoulli (i.e.\ as a dynamical system, it is isomorphic to a Bernoulli process~\cite{MR346132}). In particular, $\h[\mb{X}]>0$. We refer the reader, e.g., to~\cite{MR0442198} for more information on the subject.

           Suppose now that $\mb{X}$ is weakly Bernoulli. Then $\ts[d]$ is trivial (see, e.g., Proposition 5.17 in \cite{MR2325294}). Therefore, as an immediate consequence of Corollary~\ref{kabel}, we obtain the following:
           \begin{coro}
              Suppose that $\mb{X}$ is weakly Bernoulli. Then $\mb{X}$ is not bilaterally deterministic.
           \end{coro}
           In fact, the results in \cite{MR2325294} are formulated in a different language. One more notion, equivalent to the weak Bernoulli property, is absolute regularity. It first appeared in works of Volkonskii and Rozanov~\cite{MR121856,MR0137141} who, in turn, attribute it to Kolmogorov. Fix a probability space $(\Omega,\mathcal{F},\mathbb{P})$. Let $\mathcal{A},\mathcal{B}\subset \mathcal{F}$ be sub-$\sigma$-algebras and let
           \[
              \beta(\mathcal{A},\mathcal{B}):=\sup\frac12\sum_{i=1}^{I}\sum_{j=1}^{J}|\mathbb{P}(A_i\cap B_j) - \mathbb{P}(A_i)\mathbb{P}(B_j)|,
           \]
           where the supremum is taken over all (finite) partitions $\{A_1,\dots, A_I\}$, $\{B_1,\dots, B_J\}$ of $\Omega$, with $A_i\in \mathcal{A}$, $B_j\in \mathcal{B}$ for each $i,j$.
           Now, given a process $\mb{X}$, for $-\infty\leq J < L \leq \infty$, we define the $\sigma$-algebra
           \[
              \mathcal{F}_J^L:=\sigma(X_k : J\leq k\leq L).
           \]
           Then, for each $n\geq 1$, we define the following $\beta$-dependence coefficients:
           \[
              \beta(n):=\sup_{j\in\Z}\beta(\mathcal{F}_{-\infty}^{j},\mathcal{F}_{j+n}^{\infty}).
           \]
           We say that $\mb{X}$ is \textit{absolutely regular} (or $\beta$-mixing) if $\beta(n)\to 0$ as $n\to\infty$.

           Berbee, in~\cite{MR876396}, studied $\beta$-dependence coefficients for stationary ergodic processes. He showed that
           \[
              \lim_{n\to\infty} \beta(n)= \beta = 1-\frac1p\text{ for some }p\in \N\cup \{\infty\}.
           \]
           Moreover, he proved that if $\beta<1$ then $\ts[d]=\ts[p]$. As a consequence of his result and of Corollary~\ref{kabel}, we have:
           \begin{coro}
              Suppose that $\mb{X}$ is a stationary ergodic process with $\beta<1$. Then $\mb{X}$ is not bilaterally deterministic.
           \end{coro}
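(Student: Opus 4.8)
The plan is to read this statement as a one-step consequence of the two facts recalled immediately above it: Berbee's dichotomy for the $\beta$-dependence coefficients and Corollary~\ref{kabel}. The only inputs are (i) Berbee's theorem, guaranteeing that $\beta<1$ forces $\ts[d]\tos[=][\Pro]\ts[p]$, and (ii) Corollary~\ref{kabel}, which converts the coincidence $\ts[d]=\ts[p]$ into the conclusion ``$\mb{X}$ is not bilaterally deterministic'', provided the process has positive entropy. So the whole argument is a matter of chaining these two results and checking that their hypotheses are met.

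First I would invoke Berbee's result verbatim: since $\mb{X}$ is stationary and ergodic with $\beta<1$, his theorem yields $\ts[d]\tos[=][\Pro]\ts[p]$ as measure-$\sigma$-algebras. Then I would apply Corollary~\ref{kabel} with $\mb{Z}=\mb{X}$: given $\h[\mb{X}]>0$ together with $\ts[d]=\ts[p]$, the corollary states precisely that $\mb{X}$ is not bilaterally deterministic. No further computation is required; the two cited results compose directly.

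The point that needs care --- and really the only content beyond quoting the two results --- is the positive-entropy hypothesis hidden in Corollary~\ref{kabel}, which I expect to be the main obstacle. This assumption is genuinely necessary and is \emph{not} implied by $\beta<1$ alone. Indeed, by Remarks~\ref{rema1.10} and~\ref{rema1.12} one always has $\ts[p]\subset\ts[d]\subset\sigma(\mb{X})$, while $\ts[p]\tos[=][\Pro]\sigma(\mb{X})$ holds exactly when $\h[\mb{X}]=0$; hence every zero-entropy process already satisfies $\ts[d]=\ts[p]=\sigma(\mb{X})$ and is therefore bilaterally deterministic. Thus the equality $\ts[d]=\ts[p]$ supplied by Berbee is informative only in the positive-entropy regime, and the conclusion can fail otherwise. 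Since in the motivating weakly Bernoulli setting positive entropy is automatic, I would either carry $\h[\mb{X}]>0$ as a standing assumption in this statement or state the corollary explicitly for positive-entropy processes; with that caveat the proof is immediate from Berbee's theorem and Corollary~\ref{kabel}.
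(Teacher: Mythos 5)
Your proof is correct and is exactly the paper's argument: Berbee's theorem gives $\ts[d]\tos[=][\Pro]\ts[p]$, and Corollary~\ref{kabel} then yields the conclusion. Your further observation that the positive-entropy hypothesis of Corollary~\ref{kabel} is genuinely needed and is not implied by $\beta<1$ alone (e.g.\ a constant process has $\beta=0$ yet is bilaterally deterministic, since zero entropy forces $\ts[p]\tos[=][\Pro]\ts[d]\tos[=][\Pro]\sigma(\mb{X})$) is a valid and worthwhile caveat: the paper's statement leaves $\h[\mb{X}]>0$ implicit, so carrying it as an explicit assumption, as you propose, is the right call.
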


    \subsection{Proof of the main technical result (Theorem~\ref{main})}
    \subsubsection{Part~\ref{mainA}}
           By the chain rule (cf.\ \eqref{chr}), we have
           \begin{equation}\label{chainrule}
              \ch[M_{[0, n]}][Y_{[0, n]}] = \sum_{k = 0}^{n} \ch[M_k][Y_{[0, n]}, M_{[0, k)}] =: \sum_{k = 0}^{n} H_{k, n}.
           \end{equation}
           Fix $0 \le k \le n$. Since $M_k = X_k \cdot Y_k$ and $\mb{X} \amalg \mb{Y}$, we easily get that conditionally on $\left(Y_{[0, k]}, M_{[0, k)}\right)$, $M_k$ is independent of $Y_{[k + 1, n]}$. In other words,
           \begin{equation*}
              H_{k, n} = H_k =  \ch[M_k][Y_{[0, k]}, M_{[0, k)}].
           \end{equation*}
           Now, using the definition of Shannon conditional entropy, the fact that on the event $Y_k=0$,  we have $M_k\equiv 0$, whereas on $Y_k = 1$, we have $M_k = X_k$ and the stationarity of the $(\mb{X}, \mb{Y})$, we get
           \begin{equation*}
              H_k = \pof[Y_k = 1]\ch[X_k][Y_{[0,k)},M_{[0,k)}][Y_k=1] = \pof[Y_0 = 1]\ch[X_0][Y_{[-k,0)},M_{[-k,0)}][Y_0=1].
           \end{equation*}
           Moreover, if $Y = Y_{[-k,0)}$, $M = M_{[-k,0)}$, $y = y_{[-k,0)}$, $m  = m_{[-k,0)}$, $s_{-k} = \sum_{i = -k}^{-1}y_i$, $r_{-s_{-k}} < \cdots < r_{-1}$ are such that $y_{r_{-i}} = 1$, then
           \begin{equation*}
              \pof[Y = y , M = m][Y_0 = 1] =
              \begin{dcases}
                 \pof[Y = y][Y_0 = 1]\pof[X_{\{r_{-1}, \ldots, r_{s_{-k}}\}} = m_{\{r_{-1}, \ldots, r_{s_{-k}}\}}], \;\; & s_{-k} > 0, \\
                 \pof[Y = y][Y_0 = 1], \;\;                                                                              & s_{-k} = 0, \\
              \end{dcases}
           \end{equation*}
           whenever $m\leq y$ coordinatewise (otherwise, we get zero). This implies that
           \begin{eqsn}
              H_k &= \pof[Y_0 = 1]\pof[S_{-k} = 0][Y_0 = 1]\h[X_0] \\
              &\quad +\pof[Y_0 = 1]\E_{Y_0 = 1} \iof[{S_{-k} > 0}]\ch[X_0][X_{\{r_{-1}, \ldots, r_{s_{-k}}\}}]_{|_{r_{-i} = R_{-i}, s_{-k} = S_{-k}}}.
           \end{eqsn}

           Since $\mb{Y}$ visits $1$ a.s.\ infinitely many times (in the past),
           \[
              \pof[S_{-k} = 0][Y_ 0 = 1] \to 0 \text{ as }k\to\infty.
           \]
           Moreover, $\Pro_{Y_0 = 1}$ a.s., we have $\iof[{S_{-k} > 0}] \to 1$ and
           \begin{equation*}
              \ch[X_0][X_{\{r_{-1}, \ldots, r_{s_{-k}}\}}]_{|_{r_{-i} = R_{-i}, s_{-k} = S_{-k}}} \to \ch[X_0][X_{\{r_{-1}, r_{-2}, \ldots, \}}]_{|_{r_{-i} = R_{-i} }}.
           \end{equation*}
           Thus, by the bounded convergence theorem, we get that
           \begin{equation*}
              H_k \to \pof[Y_0 = 1]\E_{Y_0 = 1}\ch[X_0][X_{\{r_{-1}, r_{-2}, \ldots, \}}]_{|_{r_{-i} = R_{-i} }},
           \end{equation*}
           which, by \eqref{chainrule}, concludes the proof of Theorem~\ref{main}~\ref{mainA}.

        \subsubsection{Part \ref{jeszcze_jeden}}
           First, we will prove a technical lemma.

           \begin{lema}\label{auxiliary lemma1}
              We have
              \begin{equation*}
                 \ch[\mb{X}\cdot\mb{Y}][\mb{Y}]=\lim\limits_{n \to \infty}\frac{1}{n}\E\iof[S_n > 0]\h[X_{r_0}, X_{r_1},\ldots, X_{r_{s_n - 1}}]_{|_{r_i = R_i,  s_n = S_n}}.
              \end{equation*}
              \begin{proof}
                 Since for any $k\in\Z$, on the event $Y_k=0$,  we have $M_k\equiv 0$, it follows that
                 \begin{equation*}
                    \ch[M_{[0, n]}][Y_{[0, n]}] = \pof[S_n > 0]\sum_{y_{[0,n]}} \pof[Y_{[0, n]} = y_{[0, n]}][S_n > 0]\h[M_{[0, n ]}][Y_{[0, n]} = y_{[0, n]}].
                 \end{equation*}
                 Moreover, if $s_n = \sum_{i = 0}^n y_i>0$ then
                 \[
                    \pof[M_{[0,n]}=m_{[0,n]}][Y_{[0,n]}=y_{[0,n]}]=\pof[X_{r_0}=m_{r_0},\dots, X_{r_{s_n - 1}}=m_{r_{s_n - 1}}],
                 \]
                 whenever $m_{[0,n]}\leq y_{[0,n]}$ coordinatewise (otherwise, we get zero).
                 Hence,
                 \begin{equation*}
                    \h[M_{[0, n ]}][Y_{[0, n]} = y_{[0, n]}] = \h[X_{r_0}=m_{r_0},\dots, X_{r_{s_n - 1}}=m_{r_{s_n - 1}}],
                 \end{equation*}
                 which results in
                 \begin{align*}
                    \ch[M_{[0, n]}][Y_{[0, n]}] & = \pof[S_n > 0]\E_{S_n > 0}\h[X_{r_0},\dots, X_{r_{s_n - 1}}]_{|_{r_i = R_i,  s_n = S_n}}
                    \\ &= \E\raz_{S_n > 0}\h[X_{r_0},\dots, X_{r_{s_n - 1}}]_{|_{r_i = R_i,  s_n = S_n}}.
                 \end{align*}
                 This completes the proof.
              \end{proof}
           \end{lema}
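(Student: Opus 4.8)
The plan is to start from the definition of the relative entropy rate in~\eqref{zwykla}, namely $\ch[\mb{X}\cdot\mb{Y}][\mb{Y}] = \lim_{n\to\infty}\tfrac1n\,\ch[M_{[0,n]}][Y_{[0,n]}]$ (passing from $[0,n-1]$ to $[0,n]$ multiplies by $\tfrac{n+1}{n}\to 1$ and so does not affect the limit, which exists by subadditivity), and to obtain a closed form for the finite-horizon quantity $\ch[M_{[0,n]}][Y_{[0,n]}]$ by conditioning on the realization of the noise. Expanding the conditional entropy as $\ch[M_{[0,n]}][Y_{[0,n]}] = \sum_{y_{[0,n]}} \pof[Y_{[0,n]} = y_{[0,n]}]\,\h[M_{[0,n]}][Y_{[0,n]} = y_{[0,n]}]$, the guiding principle is that $\mb{Y}$ masks $\mb{X}$ wherever it vanishes: on $\{Y_k = 0\}$ we have $M_k = X_k\cdot Y_k = 0$ deterministically, so the coordinates where $y$ is $0$ carry no information and contribute nothing to the entropy.

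I would then split according to $s_n := \sum_{i=0}^n y_i$. If $s_n = 0$, then $M_{[0,n]} = 0$ almost surely on $\{Y_{[0,n]} = y_{[0,n]}\}$, so $\h[M_{[0,n]}][Y_{[0,n]} = y_{[0,n]}] = 0$; these configurations drop out of the sum, which is precisely the source of the indicator $\iof[S_n > 0]$. If $s_n > 0$, let $r_0 < r_1 < \cdots < r_{s_n - 1}$ be the positions in $[0,n]$ at which $y$ equals $1$; by the very definition~\eqref{procesR} of the return process these are the realizations of $R_0, \ldots, R_{s_n-1}$ dictated by $y$. On this event the nonzero coordinates of $M_{[0,n]}$ sit exactly at $r_0, \ldots, r_{s_n - 1}$, where $M_{r_j} = X_{r_j}$.

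The key step is a clean use of the independence assumption~\eqref{DWA}: since $\mb{X}\amalg\mb{Y}$, conditioning on $\{Y_{[0,n]} = y_{[0,n]}\}$ leaves the law of $\mb{X}$ untouched, so for any admissible $m_{[0,n]} \leq y_{[0,n]}$ one has $\pof[M_{[0,n]} = m_{[0,n]}][Y_{[0,n]} = y_{[0,n]}] = \pof[X_{r_0} = m_{r_0}, \ldots, X_{r_{s_n - 1}} = m_{r_{s_n - 1}}]$ (and the conditional probability is $0$ when $m \not\leq y$). Consequently the conditional entropy collapses to the entropy of $\mb{X}$ sampled along the arrival times, $\h[M_{[0,n]}][Y_{[0,n]} = y_{[0,n]}] = \h[X_{r_0}, \ldots, X_{r_{s_n - 1}}]$. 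Summing over the configurations with $s_n > 0$ and recognizing $(r_i)$ and $s_n$ as the values of $(R_i)$ and $S_n$ prescribed by $y$, the sum rewrites as the announced expectation under the substitution $r_i = R_i$, $s_n = S_n$, giving $\ch[M_{[0,n]}][Y_{[0,n]}] = \E\,\iof[S_n > 0]\,\h[X_{r_0}, \ldots, X_{r_{s_n - 1}}]_{|_{r_i = R_i,\, s_n = S_n}}$; dividing by $n$ and letting $n\to\infty$ yields the lemma.

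I expect the genuine probabilistic content (the conditional-law identity coming from independence) to be straightforward, and the only real care to lie in the combinatorial bookkeeping: tracking the \emph{random} number $s_n$ of summands, matching each deterministic realization $r_i$ to the random return time $R_i$ under the substitution convention introduced after Theorem~\ref{main}, and verifying that the masked ($s_n = 0$) configurations contribute exactly zero rather than being merely asymptotically negligible.
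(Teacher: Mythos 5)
Your proposal is correct and follows essentially the same route as the paper: condition on the realization of $Y_{[0,n]}$, discard the $s_n=0$ configurations (which contribute zero entropy since $M_{[0,n]}\equiv 0$ there), and use the independence $\mb{X}\amalg\mb{Y}$ to identify the conditional law of $M_{[0,n]}$ with the law of $\mb{X}$ sampled along the arrival times, so that the conditional entropy collapses to $\h[X_{r_0},\ldots,X_{r_{s_n-1}}]$. The only addition is your (correct) remark about the harmless shift from $[0,n-1]$ to $[0,n]$ in the normalization.
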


           Notice now that
           \begin{equation*}
              \frac{1}{n}\h[X_{r_0},\dots,X_{r_{s_n-1}}]=\frac{1}{n}\h[X_{[0,n]}]- \frac{1}{n}\ch[X_{[0,n]\setminus \{r_0,\dots, r_{s_n-1}\}}][X_{r_0},\dots,X_{r_{s_n-1}}],
           \end{equation*}
           $\lim_{n\to\infty}\frac1n\h[X_{[0,n]}]=\h[\mb{X}]$ and that (by the ergodicity of $\mb{Y}$) we have $\iof[S_n > 0 ] \to 1$. Thus, in order to conclude the proof it remains to find $\lim\limits_{n \to \infty}\E\iof[S_n > 0] H(n, \mb{R})$ where
           \begin{equation*}
              H(n, \mb{r}) := \frac{1}{n}\ch[X_{[0,n]\setminus \{r_0,\dots, r_{s_n-1}\}}][X_{r_0},\dots,X_{r_{s_n-1}}], \quad \mb{r} = \proc[r][\Z].
           \end{equation*}
More precisely, if we show that 
           \begin{equation}\label{abc1}
              \lim\limits_{n \to \infty} H(n, \mb{R}) = \Pro(A_0)\E_{A_0} \ch[X_{[r_0+1, r_{1} - 1]}][X_{(-\infty, r_0]}, X_{\{r_{ 1}, r_{2}, \ldots\}}]|_{r_i=R_i}
           \end{equation}
holds a.e.\ then by the bounded convergence theorem (as $H(n,\mb{R})\leq \h[X_0])$ we will have
\[
\lim_{n\to\infty}\E \iof[S_n>0]H(n,\mb{R}) = \Pro(A_0)\E_{A_0} \ch[X_{[r_0+1, r_{1} - 1]}][X_{(-\infty, r_0]}, X_{\{r_{ 1}, r_{2}, \ldots\}}]|_{r_i=R_i}
\]           
since $\lim_{n\to\infty}\iof[S_n>0]=1$ a.e.\ by the ergodicity of $\mb{Y}$. 

Let
           \[
			A_i=[Y_0=\ldots= Y_{i-1}=0,Y_i=1] \text{ for }i\geq 0
			\]            
(in particular, $A_0=[Y_0=1]$).			

              Fix $\mb{y}$ and $n\in\N$.
              By the chain rule, we get
              \begin{align*}
                  nH(n, \mb{r}) &= \underbrace{\ch[X_{[0,r_0-1]}][X_{\{r_0,\dots, r_{s_n-1}\}}]}_{\Sigma_1(n)} +\underbrace{\ch[(X_{[r_{s_n-1}+1,n]}][X_{r_{s_n-1}}]}_{\Sigma_3(n)} \\
                  & \quad +\underbrace{\sum_{i=0}^{s_n-2}\ch[X_{[r_i+1,r_{i+1}-1]}][X_{[0,r_i]},X_{\{r_{i+1},\dots,r_{s_n-1}\}}]}_{\Sigma_2(s_n-1)} .
              \end{align*}
              We will deal first with the summands $\Sigma_1(n)$ and $\Sigma_3(n)$. Clearly,
              \begin{equation}\label{R1}
                 \frac{1}{n}\Sigma_1(n) \le \frac{1}{n}\h[X_{[0,r_0-1]}] \leq\frac{r_0}{n}H(X_0)\to 0
              \end{equation}
              when $n\to\infty$. Since $s_n=s_{r_{s_n-1}}$, $\frac{s_n}{n} \to \pof[Y_0 = 1] > 0$  (by the ergodicity of $\mb{Y}$) and $r_{s_n - 1} \to \infty$, it follows that
              \begin{equation}\label{R3}
                 \frac{\Sigma_3(n)}{n} \le \frac{n-r_{s_n-1}}{n}H(X_0)=\left(1-\frac{r_{s_n-1}}{s_{r_{s_n-1}}}\cdot\frac{s_n}{n}\right)H(X_0) \to 0.
              \end{equation}
              In order to deal with $\Sigma_2(s_n-1)$, notice that
              \begin{equation}\label{laczymy}
                 \frac{1}{n}\Sigma_2(s_n-1) =\frac{s_n}{n}\frac{1}{s_n}\Sigma_2(s_n-1).
              \end{equation}
              Because of $\frac{s_n}{n} \to \pof[Y_0 = 1]$, it suffices to show that $\mathbb{P}_{A_0}$-a.e.\ we have
              \begin{equation}\label{aim}
                 \lim\limits_{n\to\infty}\frac{1}{n}\Sigma_2(n) = \E_{A_0}\ch[X_{[r_0+1, r_{1} - 1]}][X_{(-\infty, r_0]}, X_{\{r_{ 1}, r_{2}, \ldots\}}].
              \end{equation}
              Using the stationarity of $\mb{X}$, for $t_i = r_i - r_{i - 1}$, we obtain
              \begin{align*}
                  \Sigma_2(n)&=\sum_{i=0}^{n-1}\ch[X_{[r_i+1,r_{i+1}-1]}][X_{[0,r_i]},X_{\{r_{i+1},\dots,r_{n}\}}]                     \\
                  & =\sum_{i = 0}^{n-1}\ch[X_{[1, t_{i + 1} - 1]}][X_{[-r_i, 0]}, X_{\{t_{i + 1}, \dots, t_{i + 1} + \dots + t_{n}\}}].
              \end{align*}
              We would like to apply Maker's ergodic theorem to study the above sum. However, we cannot do it directly due to the term $X_{[-r_i, 0]}$ appearing in the conditional entropies. This obstacle will be overcome by estimating each summand from below and above.

              Fix $k\in\N$.
              Then for every $i$ such that $r_i \ge k$ and for every $j \in \N$, we have
              \begin{multline}\label{thebounds}
                 H_{\infty, j}\left(t_{i + 1}, t_{i + 2}, \ldots \right) \le \ch[X_{[1, t_{i + 1} - 1]}][X_{[-r_i, 0]}, X_{\{t_{i + 1}, \ldots, t_{i + 1} + \cdots t_{i + j}\}}] \\\le H_{k, j}\left(t_{i + 1}, t_{i + 2}, \ldots \right),
              \end{multline}
              where $H_{k, j}\left(t_{i + 1}, t_{i + 2}, \ldots \right) = \ch[X_{[1, t_{i + 1} - 1]}][X_{(-k, 0]}, X_{\{t_{i + 1}, \ldots, t_{i + 1} + \cdots t_{i + j}\}}]$ for $k \in \Z\cup\{\infty\}$. Clearly,
              \begin{align*}
                 H_{k, j}\left(t_{1}, t_{2}, \ldots \right) \xrightarrow{j\to\infty} H_{k}\left(t_{1}, t_{2}, \ldots \right) &:= \ch[X_{[1, t_{1} - 1]}][X_{(-k, 0]}, X_{\{t_{ 1}, t_{1} +  t_{2}, \ldots\}}]\\
                 &=\ch[X_{[r_0+1, r_{1} - 1]}][X_{(-k, r_0]}, X_{\{r_{1}, r_2, \ldots\}}].
              \end{align*}
              By the entropy chain rule and Kac's lemma,
              \begin{equation}\label{majorization}
                 \sup_{k, j \in \N} H_{k, j}(T_{[1, \infty)}) \le \h[X_0] T_1 \in L_1(\Pro_{A_0}).
              \end{equation}
              Therefore, Maker's ergodic theorem implies that, for every $k\in \N\cup\{\infty\}$, $\mathbb{P}_{A_0}$ a.s., we have
              \begin{equation}\label{zmakera}
                 \lim\limits_{n\to\infty}\frac{1}{n}\sum_{i = 0}^{n-1} H_{k, n - i}\left(t_{i + 1}, t_{i + 2}, \ldots \right) \to \E_{A_0} H_{k}\left(T_{1}, T_{2}, \ldots \right).
              \end{equation}
              Using~\eqref{thebounds}, it follows from the definition of $\Sigma_2$ (and the chain rule) that
              \begin{align}
                 \begin{split}\label{szacuj}
                    \frac1n\sum_{i=0}^{n-1}H_{\infty,n-i}(t_{i+1},t_{i+2},\dots)&\leq \frac1n\Sigma_2(n)\\
                    &\leq\frac{t_1+\dots+t_k}{n}H(X_0)+ \frac1n\sum_{i=k}^{n-1}H_{k,n-i}(t_{i+1},t_{i+2},\dots)\\
                    &\leq \frac{t_1+\dots+t_k}{n}H(X_0)+\frac1n\sum_{i=0}^{n-1}H_{k,n-i}(t_{i+1},t_{i+2},\dots),
                 \end{split}
              \end{align}
              with $\frac{t_1+\dots+t_k}{n}H(X_0)\xrightarrow{n \to \infty} 0$. Thus, due to \eqref{zmakera},
              \begin{equation*}
                 \E_{Y_0 = 1} H_{\infty}\left(T_{1}, T_{2}, \ldots \right) \le \lim\limits_{n \to \infty}\frac1n\Sigma_2(n) \le \E_{Y_0 = 1} H_{k}\left(T_{1}, T_{2}, \ldots \right).
              \end{equation*}
              Notice that $H_k \to H_\infty$ as $k\to\infty$. Hence, combining \eqref{majorization} and  the bounded convergence theorem, we obtain
              \begin{equation}\label{zmake}
                 \lim\limits_{n\to\infty}\frac{1}{n} \Sigma_2(n)= \E_{A_0} H_{\infty}\left(T_{1}, T_{2}, \ldots \right)
              \end{equation}
              $\mathbb{P}_{A_0}$ a.s.\ which is exactly \eqref{aim} under $\mathbb{P}_{A_0}$.
              
              It remains to show \eqref{aim} under $\mathbb{P}_{A_i}$ for $i\geq 1$. However, it is a direct consequence of the above and the following lemma:
              \begin{lema}
              Suppose that we have a sequence of measurable functions $(f_n)_{n\geq 1}$ depending on $(T_n)_{n\geq 1}$ and a measurable function $f$ depending on $\mb{Y}$ such that 
              \begin{equation}\label{ta}
              f_n((T_n)_{n\geq 1}) \to f(\mb{Y})
              \end{equation}
              $\mathbb{P}_{A_0}$-a.e. Then~\eqref{ta} holds also $\mathbb{P}_{A_i}$-a.e.\ for each $i\geq 1$.
              \end{lema}
              \begin{proof}
For the sake of simplicity, we assume that $\mb{Y}$ is a cannonical process. Let $B_0\subset A_0$ be the set where~\eqref{ta} holds. We claim that $B_i:=A_i\cap S^{-i}B_0$ is of full measure $\mathbb{P}_{A_i}$ and~\eqref{ta} holds on $B_i$. Indeed, since $S^iA_i\subset A_0$, we have
              \[
              \mathbb{P}_{A_i}(A_i\setminus B_i)=\frac{1}{\mathbb{P}(A_i)}\mathbb{P}(A_i\setminus S^{-i}B_0)=\frac{1}{\mathbb{P}(A_i)}\mathbb{P}(S^iA_i\setminus B_0)\leq\frac{1}{\mathbb{P}(A_i)}\mathbb{P}(A_0\setminus B_0)=0.
              \]
              Moreover, if $\mb{y}\in B_i$ then $S^i\mb{y}\in S^iA_i\cap B_0\subset A_0\cap B_0=B_0$. Since $\mb{y}\in A_i$, it follows immediately that $T_n(\mb{y})=T_n(S^i\mb{y})$ for all $n\geq 1$ which completes the proof.
              \end{proof}

\subsection{General setting: proof of Corollary~\ref{VVV} and related examples}\label{here_are_proofs}
In this section we will study a certain class of {\em good} $(\mb{X},\mb{Y})$ with no entropy drop. We begin by the proof of Corollary~\ref{VVV}.

\begin{proof}[Proof of Corollary~\ref{VVV}]
Let $L\geq 1$ be such that $\textrm{supp}\ \mb{y} \supset L\mathbb{Z} +a$ for some $a$ and for a.e.\ realization $\mb{y}$ of $\mb{Y}$. Let $(X,\mathcal{B},\mu,T)$ be a measure-theoretic dynamical system with entropy less than $\frac{1}{L}\log 2$ and take a measurable partition $X=J \cup J^c$ that is generating for the map $T^L$. Let $Y$ be the orbit closure of $\mb{y}$ in $\{0,1\}^\mathbb{Z}$ under the left shift.

Process $\mb{M}$ corresponds to coding of points in $(X\times Y,T\times S)$ with respect to $J\times C$ (with $C=[1]\subset Y$) and its complement. Using Theorem~\ref{main} \ref{jeszcze_jeden}, we obtain
\begin{equation*}
\h[\mb{M}] =\h[\mb{X}]-\mathbb{P}(A_0)\E_{A_0}\ch[X_{[r_0+1,r_1-1]}][X_{(-\infty, r_0]},X_{\{r_1,r_2,\dots\}}]|_{r_i=R_i}=\h[\mb{X}].
\end{equation*}
(a.e.\ $\mb{r}$ contains a two-sided infinite arithmetic progression with difference $L$, the partition $\{J,J^c\}$ is generic for $T^L$ and thus the conditional entropy in the above formula is equal to zero).
\end{proof}
It would be interesting to know if in the above example $\mb{X}$ can be recovered from $\mb{M}$. Let us see now that this can be the case when $\mb{Y}$ arises from the rotation on two points $\{0,1\}$. We will look at it both from the probabilitic and ergodic-theoretic perspective.

        \begin{exam}\label{przyklad}
           Let $\left(\xi_i\right)_{i\in \Z}$ be a sequence of i.i.d. random variables such that
           \begin{equation*}
              \pof[\xi_0 = 0] = \pof[\xi_0 = 1] = \frac{1}{2},
           \end{equation*}
           an arbitrary (relabelling) $1$-$1$ function $F\colon\{0,1\}^2 \to \{0, 1, 2, 3\}$ and put
           \begin{equation*}
              X_i = F(\xi_i, \xi_{i + 1}), \qquad \mb{Y} \sim \frac{1}{2}(\delta_a+\delta_{Sa}),
           \end{equation*}
           where $a:=(\ldots 0,1,0,1,\underaccent{\circ}{0},1,0,1\ldots)$, $S$ stands for the left shift and $\mb{X} \amalg \mb{Y}$. Since $\mb{X}$ is a Markov chain and $F$ is $1$-$1$, we have
           \begin{equation*}
              \h[\mb{X}] = \ch[X_1][X_0] = \ch[\xi_1, \xi_2][\xi_0, \xi_1] = \ch[\xi_2][\xi_0, \xi_1] = \h[\xi_2]=\log 2.
           \end{equation*}
           Moreover, $\mathbb{P}_{Y_0=1}(R_{-1}=-2)=1$ and therefore
           \[
              \E_{Y_0=1} \ch[X_0][X_{\{r_{-1}, r_{-2}, \cdots\}}]|_{r_i=R_i}=\ch[X_0][X_{-2}]=\h[X_0]=2\log 2.
           \]
           Clearly, for every $j\in \Z$, $\left(X_i\right)_{i \le j} \amalg \left(X_i\right)_{i \ge j + 2}$ yielding
           \begin{equation*}
              \frac{1}{n}f(y_{[0,n]}) = \frac{1}{n}\h[X_{r_1}, \ldots, X_{r_m}] =  \frac{m}{n} \h[X_0] \to \frac{1}{2}\h[X_0].
           \end{equation*}
           Thus, by Theorem~\ref{main}~\ref{mainA}, $\h[\mb{M}] = \frac{1}{2}\h[X_0] = \frac{1}{2} 2 \log 2 =\log 2 = \h[\mb{X}]$. In fact, notice that since $F$ is 1-1, knowing all even (resp.\ all odd) coordinates of a realization $\mb{x}$ of $\mb{X}$ determines its \textbf{all} coordinates. In other words, $\mb{M}$ contains full information about $\mb{X}$.
        \end{exam}
We will now see how to use ergodic-theoretic approach to modify the above idea so that $X_i \in \{0,1\}$ and keep the property $\h[\mb{M}]=\h[\mb{X}]$ and the ability to recover $\mb{X}$ from $\mb{M}$.
           
           \begin{exam}\label{i}
              Let $(X,\mathcal{B},\mu,T)$ be an ergodic automorphism, with $h(\mu)\in (0,\log 2)$ and let $S$ be the rotation on $Y=\{0,1\}$, with the unique invariant measure denoted by $\nu$. Let $\{J,J^c\}$ be a (measurable) generating partition of $X$ for $T$ (the existence of such a partition follows by Krieger's finite generator theorem~\cite{MR259068}) and let $C:=\{1\}\subset Y$ We consider the following stationary process:
              \[
                 \mb{X}=\br*{\raz_J \circ T^i}_{i\in \Z} \text{ and }\mb{Y}=\br*{\raz_C \circ S^i}_{i\in\Z}.
              \]
Then $\mb{M}:=\mb{X}\cdot\mb{Y}$ corresponds to coding of points in the dynamical system $(X\times Y,T\times S)$ with respect to $J\times C$ and its complement:
              \begin{align*}
                  & (x,1)\mapsto (\dots,\underaccent{\circ}{\raz_J(x)},0,\raz_J(T^2x),0,\dots);    \\
                  & (x,0)\mapsto (\dots,\underaccent{\circ}{0},\raz_J(Tx),0,\raz_J(T^3x),\dots).
              \end{align*}
              Equivalently, $\mb{M}$ corresponds to the dynamical system that is a tower of height two above the factor of $T^2$ corresponding to the partition $\{J,J^c\}$.

              Assume now additionally that $h(T)<\frac12\log2$ and the partition $\{J,J^c\}$ is generating for $T^2$ (e.g.\ $T$ can be a Bernoulli automorphism, with entropy less than $\frac12\log2$). Then $\mb{M}$ corresponds to a tower of height two above $T^2$, denoted by $R$, and given by
              \[
              R(x,0)=(x,1),\ R(x,1)=(T^2x,0).
              \]
	      Notice that $R$ is isomorphic to $T\times S$ via the map $\Phi$ given by
	      \[
	      \Phi(x,0)=(x,0),\ \Phi(x,1)=(Tx,1)
	      \]
	      (we easily check that $\Phi\circ R=(T\times S)\circ \Phi$). It follows that
              \begin{equation}\label{brakspadku}
                 \h[\mb{M}]=h(\mu\otimes \nu)=h(\mu)=\h[\mb{X}]>0.
              \end{equation}
In fact, since $\Phi$ is an isomorphism, one can filter out $\mb{X}$ from $\mb{M}$.
\end{exam}

\subsection{$\mathscr{B}$-free systems: proof of Proposition~\ref{dychotomy}}
Let $\mathscr{B}\subset \mathbb{N}$, let $\eta=\iof[\mathcal{F}_\mathscr{B}]$ and let $(X_\eta,S)$ be the corresponding $\mathscr{B}$-free system, with the underlying Mirsky measure $\nu_\eta$.
 Recall that:
\[
h(\widetilde{X}_\eta,S)=\overline{d}(\mathcal{F}_\mathscr{B})=\nu_\eta(1),
\]
so $\nu_\eta\neq \delta_{(\dots,0,0,0,\dots)}$ is equivalent to $h(\widetilde{X}_\eta,S)>0$. Thus, $\nu_\eta\neq \delta_{(\dots,0,0,0,\dots)}$ is necessary and sufficient for the existence of $\kappa$ with $h(\nu_\eta\ast \kappa)>0$. 

\begin{proof}[Proof of Proposition~\ref{dychotomy}]
It was shown in Theorem 3.7 in~\cite{Dy-Ka-Ku-Le} that the following are equivalent:
\begin{itemize}
\item
$(X_\eta,S)$ is proximal,
\item 
$\mathscr{B}$ contains an infinite pairwise coprime subset,
\item
the support of $\eta$ does not contain a two-sided infinite arithmetic progression.
\end{itemize}
Thus, in order to complete the proof of Proposition~\ref{dychotomy}, we need to show that in the proximal case, for infinitely many $k\geq 1$ the block of the form $10\ldots 01$ (with $k$ zeros between the 1's) is of positive Mirsky measure $\nu_\eta$. An important notion in the theory of $\mathscr{B}$-free systems is that of \textit{tautness}~\cite{Ha},defined in terms of the logarithmic density of sets of multiples. We say that $\mathscr{B}$ is taut if for any $b\in\mathscr{B}$, we have
\[
\boldsymbol{\delta}(\mathcal{M}_{\mathscr{B}}) > \boldsymbol{\delta}(\mathcal{M}_{\mathscr{B}\setminus \{b\}}),
\]
where $\boldsymbol{\delta}(A)=\lim_{N\to\infty}\frac{1}{\log N}\sum_{n\leq N}\frac{1}{n}\mathbf{1}_{A}(n)$ for any $A\subset \mathbb{Z}$. It was proved in~\cite{Dy-Ka-Ku-Le} (see Theorem C and Lemma 4.11 therein) that given any $\mathscr{B}$, there exists a taut set $\mathscr{B}'$ such that $\mathcal{M}_{\mathscr{B}'}\subset \mathcal{M}_\mathscr{B}$ and $\nu_{\eta'}=\nu_\eta$.
Keller~\cite{Ke} proved that the Mirsky measure of any taut set has full support. Therefore, whenver $\nu_\eta=\nu_{\eta'}\neq \delta_{(\dots,0,0,0,\dots)}$ then any block of the form $10\dots01$ appearing in $\eta'$ (and there are infinitely many such blocks as we exclude the Dirac measure at $(\dots,0,0,0,\dots)$!) is in fact of positive $\nu_\eta$-measure.  
\end{proof}

           \appendix
\section{Ergodic theory viewpoint}

    \subsection{Direct answer to Question \ref{Q1}\ref{b}}\label{erg4b}
        Let us first recall the remaining necessary notions from ergodic theory and theory of joinings (for more information, we refer the reader, e.g., to~\cite{MR2809170,MR2723325,MR1958753}). Given two measure-preserving transformations $(X_i,\mathcal{B}_i,\mu_i,T_i)$, $i=1,2$, any $\rho\in \mathcal{M}(X_1\times X_2,T_1\times T_2)$ that projects onto $\mu_1$ and $\mu_2$ onto the first and second coordinate, respectively, is called a joining of $T_1$ and $T_2$. The set of joinings is always non-empty (it contains the product measure). If $T_1=T_2$, we speak of self-joinings. The diagonal self-joining of $(X,\mathcal{B},\mu, T)$ is determined by $\triangle(A\times B)=\mu(A\cap B)$ for $A, B\in\mathcal{B}$. If $(Z,\mathcal{D},\rho,R)$ is a common factor of $T_1$ and $T_2$, then also the set of joinings of $T_1$ and $T_2$ that project onto the diagonal self-joining of the common factor is non-empty (it contains the so-called relatively independent extension over the common factor, see~\cite{MR1958753}).

        \begin{prop}\label{p:posent}
           Assume that $\nu,\kappa\in \mathcal{M}^e(\{0,1\}^{\Z},S)$ satisfy $h(\nu)=0$ with $\nu\neq \delta_{(\ldots0,0,0\ldots)}$ and $h(\kappa)>0$. Then $h(\nu\ast\kappa)>0$. \end{prop}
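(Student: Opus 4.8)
The plan is to argue by contradiction, showing that $h(\nu*\kappa)=0$ would force the positive-entropy factor $\kappa$ to be entropy-deterministic. First I would realize $\nu*\kappa$ as the law of $\mb{M}=\mb{X}\cdot\mb{Y}$, where $\mb{X}\sim\kappa$, $\mb{Y}\sim\nu$ and $\mb{X}\amalg\mb{Y}$, so that $\h[\mb{M}]=h(\nu*\kappa)$ and $\mb{M}$ is a factor of $(\mb{X},\mb{Y})$. I would record that $\alpha:=\pof[Y_0=1]>0$: if $\pof[Y_0=1]=0$, then by stationarity $\pof[Y_i=1]=0$ for all $i$, whence $\nu=\delta_{(\ldots0,0,0\ldots)}$, contrary to assumption. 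Now suppose, for contradiction, that $\h[\mb{M}]=0$. Then $\sigma(\mb{M})$ is a zero-entropy (shift-invariant) factor of the system $(\mb{X},\mb{Y})$, and hence is contained in its Pinsker $\sigma$-algebra $\Pi=\Pi(\mb{X},\mb{Y})$.

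The crux is to identify $\Pi$. Using the characterization $\Pi\tos[=][\Pro]\ts[p]$ of Pinsker's theorem (Remark~\ref{rema1.12}), applied both to the joint process $(\mb{X},\mb{Y})$ and to $\mb{X}$ and $\mb{Y}$ separately, together with $\mb{X}\amalg\mb{Y}$ and $\h[\mb{Y}]=h(\nu)=0$, I would establish
\[
   \Pi(\mb{X},\mb{Y})\tos[=][\Pro]\Pi(\mb{X})\otimes\sigma(\mb{Y}).
\]
Indeed, since $\mb{X}\amalg\mb{Y}$ we have $\sigma(X_{(-\infty,-n]},Y_{(-\infty,-n]})=\sigma(X_{(-\infty,-n]})\otimes\sigma(Y_{(-\infty,-n]})$, so the identity reduces to the claim that, for the product measure, the past tail of the pair equals the product of the individual past tails, i.e.\ $\ts[p](\mb{X},\mb{Y})\tos[=][\Pro]\ts[p](\mb{X})\otimes\ts[p](\mb{Y})$; finally $\ts[p](\mb{Y})\tos[=][\Pro]\Pi(\mb{Y})\tos[=][\Pro]\sigma(\mb{Y})$ because $h(\nu)=0$. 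I expect this product-of-tails identity under independence to be the main technical obstacle: it is a reverse-martingale statement for product measures, which I would handle by an $L^2$-argument exploiting the independence of the two coordinate filtrations.

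Granting this, the conclusion follows by a sectioning argument. From $\sigma(\mb{M})\subset\Pi(\mb{X})\otimes\sigma(\mb{Y})$ we get that $M_0=X_0\cdot Y_0$ is $\bigl(\Pi(\mb{X})\otimes\sigma(\mb{Y})\bigr)$-measurable; multiplying by $\1_{\{Y_0=1\}}$, which is $\sigma(\mb{Y})$-measurable, and using $M_0\,\1_{\{Y_0=1\}}=X_0\,\1_{\{Y_0=1\}}$, I obtain that $X_0\,\1_{\{Y_0=1\}}$ is $\bigl(\Pi(\mb{X})\otimes\sigma(\mb{Y})\bigr)$-measurable as well. Fixing a realization $\mb{y}$ with $y_0=1$ (such $\mb{y}$ exist since $\alpha>0$) and taking the corresponding section in the $\mb{X}$-variable — sections of a $\Pi(\mb{X})\otimes\sigma(\mb{Y})$-measurable function are $\Pi(\mb{X})$-measurable — shows that $X_0$ is $\Pi(\mb{X})$-measurable. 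By stationarity (applying the shift, or repeating the argument for each $M_i$) every $X_i$ is $\Pi(\mb{X})$-measurable, so $\sigma(\mb{X})\tos[=][\Pro]\Pi(\mb{X})$ and therefore $h(\kappa)=\h[\mb{X}]=0$, contradicting $h(\kappa)>0$. Hence $h(\nu*\kappa)=\h[\mb{M}]>0$. This route is short and gives positivity without any explicit entropy formula, at the cost of needing the product-tail identity above as its one nontrivial ingredient.
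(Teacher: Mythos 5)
Your argument is correct and follows essentially the same route as the paper's: assume $h(\nu\ast\kappa)=0$, deduce that the factor $\sigma$-algebra generated by $\mb{M}$ sits inside the Pinsker algebra of the product system, identify that Pinsker algebra as $\mathcal{B}\otimes\Pi(\kappa)$ (the paper cites this product formula from the literature, whereas you rederive it from Pinsker's tail characterization plus the exchange of decreasing intersections with independent joins --- a valid $L^2$/reverse-martingale argument), and then use $\pof[Y_0=1]>0$ to force $\{x : x_0=1\}$ into $\Pi(\kappa)$ and conclude $h(\kappa)=0$. The only cosmetic difference is in that last step, where the paper uses a Hilbert-space tensor-product argument in place of your Fubini/section argument; both are sound.
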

        \begin{proof}
           Consider $(\{0,1\}^{\Z}\times\{0,1\}^\Z,\nu\otimes\kappa,S\times S)$ and denote by $\Pi(\kappa)\subset\mathcal{B}$ the Pinsker $\sigma$-algebra of~$\kappa$. Recall that for $(X_i,\mu_i,T_i)$, $i=1,2$, we have the corresponding relation between the Pinsker $\sigma$-algebras: $\Pi(X_1\times X_2,\mu_1\otimes\mu_2, T_1\times T_2)=\Pi(T_1)\otimes\Pi(T_2)$, see, e.g.\ \cite{GLASNER_2000}. It follows that
           \begin{equation}\label{pinsker1}
              \Pi(\nu\otimes\kappa)=\mathcal{B}\otimes\Pi(\kappa).
           \end{equation}
           Let $C:=\{x\in\{0,1\}^\Z : x_0=1\}$ and suppose that $h(\nu\ast\kappa)=0$, i.e.\ $\Pi(\nu\ast\kappa)=\mathcal{B}$. Therefore, additionally using~\eqref{pinsker1}, we obtain
           \[
              M^{-1}(\mathcal{B})=M^{-1}(\Pi(\nu\ast\kappa))\subset\Pi(\nu\otimes\kappa)=\mathcal{B}\otimes\Pi(\kappa)
           \]
           and it follows that
           $$
              C\times C=M^{-1}{C}\in\mathcal{B}\otimes\Pi(\kappa)
           $$
           (even though $C\times C=M^{-1}C$ is an equality between sets, we think of it up to sets of measure zero, cf.\ also Remark~\ref{ram12}).
           Hence, for $C$ on the second coordinate in $C\times C$, we have $C\in\Pi(\kappa)$.\footnote{
              Let $H_1,H_2$ be Hilbert spaces and let $G_2\subset H_2$ be a closed subspace. Suppose that
              $
                 f\otimes g\in H_1\otimes G_2, \text{ with }f\neq 0.
              $
              Let $g=g_0+g'_0$, with $g_0\in G_2$ and $g'_0\in G_2^\perp$. It follows that $f\otimes g'_0\in H_1\otimes G_2$. But, on the other hand, we can approximate
              $f\otimes g'_0$ by tensors of the form $\sum_{n}\alpha_n f_k\otimes h_k$ with $h_k\in G_2$ which are all orthogonal with $f\otimes g'_0$. This means that $g_0'=0$ and, thus, we have $g\in G_2$.}
           Since $\{C,C^c\}$ is a generating partition, $\Pi(\kappa)=\mathcal{B}$ (modulo $\kappa$) and it follows immediately that $h(\kappa)=0$.
        \end{proof}

    \subsection{Simple proof of Theorem~\ref{tw:postac}}\label{se:a2}

 	We begin this section by the following simple but general observation (it overlaps with Theorem~\ref{tw:postac} for uniquely ergodic $\mathscr{B}$-free systems):

        \begin{prop}\label{simple}
           Suppose that $(Y,S)$ is a uniquely ergodic subshift of $\{0,1\}^\Z$. Let $\widetilde{Y}=M(Y\times \{0,1\}^\Z)$ be the hereditary closure of $Y$. Then, for any $\nu\in \mathcal{M}^e(\widetilde{Y},S)$, there exists $\rho\in \mathcal{M}^e(Y\times \{0,1\}^\Z,S\times S)$ such that $M_\ast(\rho)=\nu$.
        \end{prop}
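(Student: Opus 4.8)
The plan is to recognize $M\colon Y\times\{0,1\}^\Z\to\widetilde{Y}$ as a topological factor map and then to lift the ergodic measure $\nu$ to an ergodic measure upstairs by a Krein--Milman argument on the fiber of invariant measures over $\nu$. First I would record that $M$ is continuous (coordinatewise multiplication is jointly continuous in the product topology), surjective onto $\widetilde{Y}$ by the very definition of the hereditary closure, and that it intertwines the two actions: $(M(Sx,Sw))_i=x_{i+1}w_{i+1}=(SM(x,w))_i$, so $M\circ(S\times S)=S\circ M$. Hence $M$ is a factor map from $(Y\times\{0,1\}^\Z,S\times S)$ onto $(\widetilde{Y},S)$, and the induced map $M_\ast$ carries $(S\times S)$-invariant measures to $S$-invariant ones.

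Next I would introduce the set
\[
   K:=\{\rho\in\mathcal{M}(Y\times\{0,1\}^\Z,S\times S) : M_\ast(\rho)=\nu\}
\]
and show it is non-empty, convex, and weak$^\ast$-compact. Convexity is immediate, and compactness follows because $K$ is the preimage of the closed singleton $\{\nu\}$ under the continuous affine map $M_\ast$ restricted to the compact set $\mathcal{M}(Y\times\{0,1\}^\Z,S\times S)$. For non-emptiness I would use that $M$ is a continuous surjection of compact metric spaces, so $M_\ast$ maps all Borel probability measures onto all of $\mathcal{M}(\widetilde{Y})$ (its image is closed, convex, and contains every Dirac mass $\delta_z=M_\ast\delta_x$ with $x\in M^{-1}(z)$). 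Thus I can choose a possibly non-invariant $\lambda$ with $M_\ast(\lambda)=\nu$ and form the Cesàro averages $\rho_n:=\frac1n\sum_{k=0}^{n-1}(S\times S)^k_\ast\lambda$; any weak$^\ast$ limit point $\rho_0$ is $(S\times S)$-invariant, and since $\nu$ is $S$-invariant, continuity of $M_\ast$ gives $M_\ast(\rho_0)=\lim_n\frac1n\sum_{k=0}^{n-1}S^k_\ast\nu=\nu$, so $\rho_0\in K$.

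By the Krein--Milman theorem the non-empty compact convex set $K$ has an extreme point $\rho$, and the heart of the proof is to verify that every such extreme point is ergodic. I would argue by contradiction: let $\rho=\int\rho_\omega\,d\rho(\omega)$ be the ergodic decomposition of $\rho$ with respect to $S\times S$. Applying $M_\ast$ yields $\nu=\int M_\ast(\rho_\omega)\,d\rho(\omega)$, where each $M_\ast(\rho_\omega)$ is an ergodic $S$-invariant measure on $\widetilde{Y}$. Since $\nu$ is ergodic it is an extreme point of $\mathcal{M}(\widetilde{Y},S)$, so by uniqueness of the ergodic decomposition $M_\ast(\rho_\omega)=\nu$ for $\rho$-a.e.\ $\omega$; hence $\rho_\omega\in K$ almost surely, and the barycenter representation exhibits $\rho$ as an average of elements of $K$. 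Extremality then forces this decomposition to be trivial, i.e.\ $\rho$ is itself ergodic, giving the required $\rho\in\mathcal{M}^e(Y\times\{0,1\}^\Z,S\times S)$ with $M_\ast(\rho)=\nu$.

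The main obstacle is precisely this last step, where one must know that pushing the upstairs ergodic decomposition through $M_\ast$ is compatible with the (trivial) ergodic decomposition of $\nu$; this is exactly the point at which ergodicity of $\nu$, rather than mere invariance, is essential. I would also remark that the unique ergodicity of $(Y,S)$ is not actually needed for the stated conclusion $M_\ast(\rho)=\nu$, which holds for any factor map of this form; unique ergodicity enters only to pin down the first-coordinate marginal $\rho|_{Y}$ as the unique (Mirsky) measure on $Y$, which is what makes this proposition overlap with Theorem~\ref{tw:postac}.
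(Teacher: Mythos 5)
Your proof is correct, but it takes a genuinely different route from the paper's. The paper argues pointwise: since $\nu$ is ergodic it has a generic point $z\in\widetilde{Y}$; by heredity one lifts $z$ to a pair $(y,x)\in Y\times\{0,1\}^\Z$ with $M(y,x)=z$, takes a measure $\rho$ for which $(y,x)$ is quasi-generic, gets $M_\ast(\rho)=\nu$ from continuity of $M$ and genericity of $z$, and finishes exactly as you do, by passing to an ergodic component of $\rho$ (almost every component must push forward to $\nu$ because $\nu$ is ergodic). Your argument replaces the generic-point construction of an invariant lift by the soft Krylov--Bogolyubov step (choose any $\lambda$ with $M_\ast(\lambda)=\nu$ and take a weak$^\ast$ limit of its Ces\`aro averages), and dresses the final step in Krein--Milman language: you take an extreme point of the fibre $K$ and show it is ergodic. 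That last step is fine but heavier than needed: once you know that almost every ergodic component of an arbitrary element of $K$ again lies in $K$, you can simply select one such component, with no appeal to Krein--Milman or to the characterization of extreme points via representing measures (which is the one mildly nontrivial fact your write-up leans on implicitly). Your closing remark is accurate: unique ergodicity of $(Y,S)$ plays no role in the bare statement --- both proofs really establish that ergodic measures lift through topological factor maps of compact systems --- and it only serves to identify the marginal $\rho|_Y$. What the paper's generic-point method buys is that it is the template reused in the proof of Theorem~\ref{tw:postac}, where one must track first-coordinate marginals along a sequence of approximating systems; what your method buys is generality and independence from any pointwise lifting of $z$.
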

        \begin{proof}
           Let $z\in \widetilde{Y}$ be a generic point for $\nu$. Then there exists $y\in Y$ such that $z\leq y$. Moreover, $y$ is generic for the unique $S$-invariant measure on $Y$. Let $x\in \{0,1\}^\Z$ be such that $M(y,x)=z$. Notice that $(y,x)$ is quasi-generic for some measure $\rho\in \mathcal{M}(Y\times \{0,1\}^\Z,S\times S)$. Moreover, $M_\ast(\rho)=\nu$ follows directly from the equality $M(y,x)=z$. To complete the proof, it suffices to use the ergodic decomposition of $\rho$ (the image of a convex combination of measures is a convex combination of their images, with the same coefficients).
        \end{proof}

        \begin{rema}
           The original proof of Theorem~\ref{tw:postac} is much more involved than what we present below. However, it includes much more information about the structure of invariant measures for $(\widetilde{X}_\eta,S)$. E.g.\ it serves as a tool to prove that $(\widetilde{X}_\eta,S)$ is intrinsically ergodic~\cite{Ku-Le-We, Dy-Ka-Ku-Le}. Cf.\ also Remark~\ref{rema:a7}.
        \end{rema}

        Let now $\mathscr{B}=\{b_k : k\geq 1\}\subset\mathbb{N}\setminus\{1\}$ and, for each $K\geq 1$, let $\mathscr{B}_K:=\{b_1,\dots, b_K\}$. Set $\eta:=\raz_{\mathcal{F}_\mathscr{B}}$ and $\eta_K:=\raz_{\mathcal{F}_{\mathscr{B}_K}}$. The Mirsky measure $\nu_{\eta_K}$ (considered on $\widetilde{X}_{\eta_K}$) is the purely atomic measure given by the periodic point $\eta_K$. Moreover, $\eta_K$ is a generic point for $\nu_{\eta_K}$ while $\eta$ is quasi-generic for $\nu_\eta$, see~\cite{Dy-Ka-Ku-Le}.  Recall also the following classical result of Davenport and Erd\"os:
        \begin{theo}[\cite{Davenport:1933aa,Davenport1936}]\label{de}
           For any $\mathscr{B}\subset \N\setminus\{1\}$,
           \[
              \underline{d}(\mathcal{M}_\mathscr{B})=\lim_{n\to\infty}d(\mathcal{M}_{\mathscr{B}_K}),
           \]
           ($\underline{d}$ and $d$ denote the lower density and usual asymptotic density, respectively).\footnote{For subset $A\subset\Z$ symmetric with respect to $0$, we have $d(A)=\lim_{N\to\infty}\frac1N|A\cap [1,N]|=\lim_{N\to\infty}\frac1{2N}|A\cap [-N,N]|$; an analogous relation holds for $\underline{d}$.}
        \end{theo}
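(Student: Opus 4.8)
The plan is to reduce everything to the periodic truncations and then upgrade from natural to logarithmic density. Write $\mathcal{M}_{\mathscr{B}_K}=\bigcup_{j\le K}b_j\Z$; this set is periodic with period $\operatorname{lcm}(b_1,\dots,b_K)$, so its natural density $\alpha_K:=d(\mathcal{M}_{\mathscr{B}_K})$ genuinely exists. Since $\mathcal{M}_{\mathscr{B}_K}\subseteq \mathcal{M}_{\mathscr{B}_{K+1}}$ and $\bigcup_K \mathcal{M}_{\mathscr{B}_K}=\mathcal{M}_{\mathscr{B}}$, the sequence $(\alpha_K)$ is nondecreasing and bounded by $1$; hence $\delta:=\lim_{K\to\infty}\alpha_K$ exists, and it is this number one must identify with $\underline{d}(\mathcal{M}_{\mathscr{B}})$. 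By symmetry of all sets about $0$ I would work throughout with the one-sided counting $\frac1N|\,\cdot\,\cap[1,N]|$.

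\emph{Easy direction.} For each fixed $K$, monotonicity of lower density together with $\mathcal{M}_{\mathscr{B}_K}\subseteq\mathcal{M}_{\mathscr{B}}$ gives $\underline{d}(\mathcal{M}_{\mathscr{B}})\ge d(\mathcal{M}_{\mathscr{B}_K})=\alpha_K$; letting $K\to\infty$ yields $\underline{d}(\mathcal{M}_{\mathscr{B}})\ge\delta$.

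\emph{Hard direction, and why logarithmic density is forced.} For the reverse inequality $\underline{d}(\mathcal{M}_{\mathscr{B}})\le\delta$ one cannot argue with natural density, because $\mathcal{M}_{\mathscr{B}}$ need not possess one (Besicovitch); its upper density can strictly exceed $\delta$. The standard remedy is to pass to the logarithmic densities $\underline{d}_{\log},\overline{d}_{\log}$, using the general sandwich $\underline d(A)\le\underline{d}_{\log}(A)\le\overline{d}_{\log}(A)\le\overline d(A)$. Thus it suffices to prove $\overline{d}_{\log}(\mathcal{M}_{\mathscr{B}})\le\delta$. Since $\mathcal{M}_{\mathscr{B}_K}$ is periodic we have $\overline{d}_{\log}(\mathcal{M}_{\mathscr{B}_K})=\alpha_K$, and subadditivity of upper logarithmic density gives $\overline{d}_{\log}(\mathcal{M}_{\mathscr{B}})\le\alpha_K+\overline{d}_{\log}(\mathcal{T}_K)$, where $\mathcal{T}_K:=\mathcal{M}_{\mathscr{B}}\setminus\mathcal{M}_{\mathscr{B}_K}$. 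So the whole theorem comes down to the key lemma $\overline{d}_{\log}(\mathcal{T}_K)\to 0$ as $K\to\infty$; in fact one expects the sharp bound $\overline{d}_{\log}(\mathcal{T}_K)\le\delta-\alpha_K$.

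\emph{Main obstacle.} The crux is exactly this tail estimate, and it is genuinely delicate. One has $\mathcal{T}_K\subseteq\bigcup_{k>K}b_k\Z$, but the naive union bound gives only $\overline{d}_{\log}(\mathcal{T}_K)\le\sum_{k>K}1/b_k$, which may diverge (e.g.\ for $\mathscr{B}=$ all primes, where $\delta=1$). The divergence signals heavy overlap among the progressions $b_k\Z$, and to exploit it one must go beyond the first moment. I would run the Davenport--Erd\H{o}s second-moment (Cauchy--Schwarz / inclusion--exclusion) argument: estimate $\sum_{n\le N,\,n\in\mathcal{T}_K}\tfrac1n$ by controlling, on the logarithmic scale, the number of representations $n=b_k m$ with $k>K$ and $n$ still $\mathscr{B}_K$-free, so that the correlations between the events $\{b_k\mid n\}$ are accounted for rather than summed blindly. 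This is the technical heart of the theorem (equivalently, it is the assertion that $\mathcal{M}_{\mathscr{B}}$ possesses a logarithmic density equal to $\delta$). Once it is in hand, combining it with the easy direction gives $\underline{d}(\mathcal{M}_{\mathscr{B}})=\overline{d}_{\log}(\mathcal{M}_{\mathscr{B}})=\delta=\lim_{K\to\infty} d(\mathcal{M}_{\mathscr{B}_K})$, as claimed.
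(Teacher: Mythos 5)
First, a point of comparison: the paper does not prove Theorem~\ref{de} at all --- it is quoted from Davenport and Erd\"os as a classical black box --- so there is no internal proof to measure your attempt against; it has to stand on its own.

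Your framing is the correct and standard one: $\alpha_K=d(\mathcal{M}_{\mathscr{B}_K})$ exists by periodicity and increases to a limit $\delta$; monotonicity gives the easy inequality $\underline{d}(\mathcal{M}_{\mathscr{B}})\ge\delta$; the sandwich $\underline{d}\le\underline{d}_{\log}\le\overline{d}_{\log}\le\overline{d}$ and subadditivity reduce the converse to showing $\overline{d}_{\log}(\mathcal{T}_K)\to0$ for the tail $\mathcal{T}_K=\mathcal{M}_{\mathscr{B}}\setminus\mathcal{M}_{\mathscr{B}_K}$. But that tail estimate \emph{is} the theorem: every step you actually carry out is routine bookkeeping, and the one step containing all the content is announced (``I would run the Davenport--Erd\"os second-moment argument'') rather than executed. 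That is a genuine gap, not a stylistic omission. Moreover, the description you give of the missing step points in a slightly wrong direction: the classical elementary proof is not a Cauchy--Schwarz/second-moment computation but a first-moment identity. One assigns to each $n\in\mathcal{T}_K$ the \emph{least} index $j>K$ with $b_j\mid n$; the set $\{n: b_j\mid n,\ n\notin\mathcal{M}_{\mathscr{B}_{j-1}}\}$ is periodic of density exactly $\alpha_j-\alpha_{j-1}$, so its logarithmic counting function is $(\alpha_j-\alpha_{j-1})\log N+O(1)$, and summing over $j>K$ telescopes to $\overline{d}_{\log}(\mathcal{T}_K)\le\delta-\alpha_K$ --- this is precisely the sharp bound you conjecture, and the only real work is making the error terms uniform in $j$ (the number of relevant $j$ grows with $N$). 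Until that lemma is proved, your text is a correct reduction of the theorem to its hard part, not a proof of it.
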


        \begin{lema}\label{granica}
           For each  $\mathscr{B}=\{b_k : k\geq 1\}\subset \N\setminus \{1\}$,
           $\nu_{\eta_K}\to \nu_\eta$ weakly, as $K\to \infty.$
        \end{lema}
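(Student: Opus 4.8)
The plan is to prove weak convergence by testing it against a convergence-determining family of continuous functions and then computing the required cylinder probabilities explicitly for the (quasi-)generic points $\eta_K$ and $\eta$.

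First I would reduce to positive cylinders. Since $\{0,1\}^\Z$ is compact metrizable, it suffices to prove $\int f\,d\nu_{\eta_K}\to \int f\,d\nu_\eta$ for $f$ ranging over a dense subalgebra of $C(\{0,1\}^\Z)$ containing the constants. The coordinate functions $x\mapsto x_i$ separate points, and since $x_i\in\{0,1\}$ we have $x_i^2=x_i$; hence the algebra they generate is spanned by the monomials $x\mapsto \prod_{s\in S}x_s=\raz_{C_S^+}$, where $C_S^+:=\{x : x_s=1\text{ for all }s\in S\}$ and $S\subset\Z$ is finite. By Stone--Weierstrass this algebra is dense, so (the measures being probabilities) it is enough to show
\[
   \nu_{\eta_K}(C_S^+)\longrightarrow \nu_\eta(C_S^+)\qquad\text{for every finite }S\subset\Z.
\]

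Next I would turn each side into a density of a union of arithmetic progressions. As $\eta_K$ is a periodic generic point for $\nu_{\eta_K}$, the value $\nu_{\eta_K}(C_S^+)$ equals the frequency of those $n$ with $n+s\in\mathcal{F}_{\mathscr{B}_K}$ for all $s\in S$; writing $V_K:=\bigcup_{b\in\mathscr{B}_K}\bigcup_{s\in S}(b\Z-s)$, this is the (existing, because periodic) density $1-d(V_K)$. Using the description of the Mirsky measures as pushforwards of Haar measure $m_G$ on the profinite group $G:=\varprojlim_K \Z/L_K\Z$, $L_K:=\operatorname{lcm}(b_1,\dots,b_K)$ (for general $\mathscr{B}$ this is the content of~\cite{Dy-Ka-Ku-Le}), together with the fact that each projection $\pi_K\colon G\to \Z/L_K\Z$ is measure preserving, the same quantity becomes $\nu_{\eta_K}(C_S^+)=m_G(G\setminus U_K)$, where $U_K:=\pi_K^{-1}\big(\bigcup_{b\in\mathscr{B}_K,\,s\in S}\{g\equiv -s \bmod b\}\big)$; likewise $\nu_\eta(C_S^+)=m_G(G\setminus U_\infty)$ with $U_\infty$ defined using all of $\mathscr{B}$. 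Since $\mathscr{B}_K\nearrow\mathscr{B}$, the sets $U_K$ increase to $U_\infty$, and continuity of $m_G$ from below yields $\nu_{\eta_K}(C_S^+)=m_G(G\setminus U_K)\to m_G(G\setminus U_\infty)=\nu_\eta(C_S^+)$, which concludes the argument. In density language the same step reads $d(V_K)\to\underline{d}(V_\infty)$ with $V_\infty:=\bigcup_{b\in\mathscr{B},\,s\in S}(b\Z-s)$, and this is precisely the Davenport--Erd\"os theorem (Theorem~\ref{de}) applied to a union of arithmetic progressions, the shifts $-s$ playing no role in its proof.

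I expect the main obstacle to be the identification of the cylinder probability of the merely quasi-generic measure $\nu_\eta$ with the lower density $1-\underline{d}(V_\infty)$, uniformly in $S$: one must know that the natural sequence along which $\eta$ is quasi-generic realizes the lower density of $V_\infty$ simultaneously for all finite $S$. This is exactly what the profinite-group picture supplies (equivalently, what Theorem~\ref{de} gives in the shifted form), and it is the only non-formal ingredient; the reduction to positive cylinders and the continuity-of-measure passage to the limit are then routine.
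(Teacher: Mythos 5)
Your proof is correct, but it follows a genuinely different route from the paper's. The paper never computes any cylinder measure: it exploits the single observation that $\eta\leq \eta_K$ coordinatewise with $\{n:\eta_K(n)\neq\eta(n)\}=\mathcal{M}_\mathscr{B}\setminus\mathcal{M}_{\mathscr{B}_K}$, writes both integrals as Birkhoff averages along the sequence $(N_k)$ realizing $\underline{d}(\mathcal{M}_\mathscr{B})$ (along which $\eta$ is quasi-generic and $\eta_K$, being periodic, is generic), and bounds $\left|\int f\,d\nu_{\eta_K}-\int f\,d\nu_\eta\right|$ by a constant times the frequency of disagreements, which equals $\underline{d}(\mathcal{M}_\mathscr{B})-d(\mathcal{M}_{\mathscr{B}_K})\to 0$ by Davenport--Erd\H{o}s. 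This only needs quasi-genericity of $\eta$ plus Theorem~\ref{de}, and works uniformly over all local functions with a fixed window at once. You instead identify each cylinder probability explicitly through the Haar-measure model on $G=\varprojlim_K\Z/\operatorname{lcm}(b_1,\dots,b_K)\Z$ and conclude by continuity of $m_G$ from below along $U_K\nearrow U_\infty$; this is valid, but it imports a heavier external ingredient (the identification $\nu_\eta=\varphi_\ast(m_G)$ from~\cite{Dy-Ka-Ku-Le}, which is essentially how quasi-genericity of $\eta$ is proved in the first place), and you correctly flag that without it the claim $\nu_\eta(C_S^+)=1-\underline{d}(V_\infty)$ simultaneously for all finite $S$ is the non-formal point. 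What your approach buys is more information --- closed formulas for all cylinder measures and monotone convergence of $\nu_{\eta_K}(C_S^+)$ --- while the paper's comparison-of-generic-points trick is shorter, avoids the algebraic model entirely, and makes transparent that the whole lemma is just Davenport--Erd\H{o}s in disguise.
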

        \begin{proof}
           It suffices to show that
           $$
              \int f\, d\nu_{\eta_K} \to\int f\, d\nu_{\eta}
           $$
           for functions $f$ on $\{0,1\}^\Z$ depending on a finite number (say, $L$) of coordinates. Let $(N_k)_{k\geq 1}$ be an increasing sequence such that $\lim_{k\to \infty}\frac{1}{N_k}|\mathcal{M}_\mathscr{B}\cap [1,N_k]|=\underline{d}(\mathcal{M}_\mathscr{B})$. We then have
           \begin{align*}
              \left|\int f\, d\nu_{\eta_K} -\int f\, d\nu_{\eta}\right| & =\lim_{k\to \infty}\left|\int f\, d \frac{1}{N_k}\sum_{n\leq N_k}\delta_{S^n\eta_K}-\int f\, d\frac{1}{N_k}\sum_{n\leq N_k}\delta_{S^n\eta}\right|  \\
                                                                        & \leq \lim_{k\to \infty}\frac{1}{N_k}\sum_{n\leq N_k}\left|f(S^n\eta_K)-f(S^n\eta)\right|                                                            \\
                                                                        & \leq 2\| f\|\cdot (2L-1)\cdot \lim_{k \to \infty}\frac{1}{N_k}\sum_{n\leq N_k}\left|\{1\leq n\leq N_k : \eta_K(n)\neq \eta(n) \}\right|             \\
                                                                        & =2\|f\|\cdot (2L-1)\cdot \left|\underline{d}(\mathcal{M}_\mathscr{B})-d\left(\mathcal{M}_{\mathscr{B}_K}\right)\right|\to 0 \text{ as }K\to \infty,
           \end{align*}
           where the convergence follows from Theorem~\ref{de}.
        \end{proof}

        \begin{proof}[Proof of Theorem~\ref{tw:postac}]
           Take $\nu\in\mathcal{M}^e(\widetilde{X}_\eta,S)$.
           Since for $K\geq 1$, we have $\eta\leq \eta_K$ (coordinatewise), it follows that $\widetilde{X}_{\eta_K}\supset \widetilde{X}_\eta$, whence $\nu\in\mathcal{M}^e(\widetilde{X}_{\eta_K},S)$. Let $u_K\in \widetilde{X}_{\eta_K}$ be a generic point for $\nu$. Since $u_K\in \widetilde{X}_{\eta_K}$, $u_K\leq  S^i\eta_K$ for some $i$ (because we consider the hereditary closure of a periodic sequence). In other words, we have $u_K=S^i\eta_K \cdot y_K$ for some $y_K\in \{0,1\}^\Z$. We may assume without loss of generality that $i=0$ (since $S^{-i}u_K$ and $u_K$ are generic for the same measure). Now, $(\eta_K,y_K)$ is quasi-generic for a measure ${\rho}_K$ defined on ${X}_{\eta_K}\times \{0,1\}^\Z$. Note that its projection $\rho_K|_{X_{\eta_K}}$ onto the first coordinate satisfies $\rho_K|_{X_{\eta_K}}=\nu_{\eta_K}$. Moreover,
           $$
              M_\ast({\rho}_K)=\nu
           $$
           as $u_K=\eta_K\cdot y_K=M(\eta_K,y_K)$ is (quasi-)generic for $M_\ast({\rho}_K)$ and generic for $\nu$. Passing to a subsequence, if necessary, ${\rho}_K\to {\rho}$ (a measure on $X_\eta\times \{0,1\}^\Z$).
           Therefore, we have
           $$
              \nu=M_\ast({\rho}_K)\to M_\ast({\rho}),$$
           so $\nu=M_\ast({\rho})$. Moreover,
           $$\nu_{\eta_K}=\rho_K |_{X_\eta}\to \rho|_{X_\eta}
           $$
           so $\rho|_{X_\eta}=\nu_\eta$, in view of Lemma~\ref{granica}.
        \end{proof}

    \subsection{Example related to Question~\ref{Q1}\ref{b}}\label{ergo}
        This section is related to Remark~\ref{rema1.8}: it turns out that after relaxing the independence assumption~\ref{DWA}, there might be plenty of joint distributions of $(\mb{X},\mb{Y})$ such that $\h[\mb{X}]>\h[\mb{M}]=0$, with $\mb{X}$ and $\mb{Y}$, satisfying \ref{RAZ}. More precisely, one can prove the following ergodic-theoretic result on $\mathscr{B}$-free systems:

        \begin{theo}\label{pro:bwolne}
	For any $\mathscr{B}\subset \mathbb{N}\setminus\{1\}$, there exists $\rho\in \mathcal{M}(X_\eta\times\{0,1\}^\Z,S\times S)$ with $\rho|_{X_\eta}=\nu_\eta$, such that $h(\rho,S\times S)>0$ and $h(M_\ast(\rho),S)=0$.
        \end{theo}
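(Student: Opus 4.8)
The plan is to build $\rho$ by hand, implementing the recipe of Remark~\ref{rema1.8} in the symbolic setting: I will superimpose on the zero-entropy system $(X_\eta,\nu_\eta,S)$ an independent fair Bernoulli noise supported \emph{exactly} on the coordinates where $\eta$ vanishes, so that the coordinatewise product is forced to be identically $\mathbf{0}$ while the ambient joining nevertheless carries positive entropy. Concretely, let $\beta=B_{\nicefrac12,\nicefrac12}$ be the fair Bernoulli measure on $\{0,1\}^\Z$ and define the single-site block code $\phi\colon X_\eta\times\{0,1\}^\Z\to X_\eta\times\{0,1\}^\Z$ by $\phi(x,b)=(x,y)$ with $y_i=(1-x_i)b_i$. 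Since $y_i$ depends only on $(x_i,b_i)$, the map $\phi$ is continuous and intertwines $S\times S$ with itself, so $\rho:=\phi_\ast(\nu_\eta\otimes\beta)$ is $(S\times S)$-invariant; as $\phi$ fixes the first coordinate, $\rho|_{X_\eta}=\nu_\eta$. (In fact $\nu_\eta\otimes\beta$ is ergodic, being the product of an ergodic and a Bernoulli measure, so $\rho$ is ergodic as well, though the statement does not require this.)

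Next I would read off the two entropy conditions. For every $(x,b)$ one has $M(\phi(x,b))_i=x_i\cdot(1-x_i)b_i=0$ because $x_i\in\{0,1\}$; hence $M\circ\phi\equiv\mathbf{0}$ and $M_\ast(\rho)=\delta_{(\ldots0,0,0\ldots)}$, which has zero entropy. For the positivity of $h(\rho,S\times S)$, I would view $\rho$ as the law of the pair process $(\mb{X}',\mb{Y}')$ with $\mb{X}'\sim\nu_\eta$ and $Y'_i=(1-X'_i)b_i$, and invoke the chain rule for entropy rates, $h(\rho,S\times S)=\ph[X']+\cph[Y'][X']$. The first term vanishes since $\nu_\eta$ has zero entropy. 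For the second, conditionally on $X'_{[0,n-1]}$ the coordinates $Y'_i$ are independent, equal to a fair coin when $X'_i=0$ and equal to $0$ when $X'_i=1$; therefore $\ch[Y'_{[0,n-1]}][X'_{[0,n-1]}]=\log2\cdot\sum_{i=0}^{n-1}\nu_\eta(x_i=0)=n\,p_0\log2$, where $p_0:=\nu_\eta(\{x:x_0=0\})$. Dividing by $n$ gives $\cph[Y'][X']=p_0\log2$, so $h(\rho,S\times S)=p_0\log2$.

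It remains to guarantee $p_0>0$, and this is the only place the hypothesis on $\mathscr{B}$ enters — the step I would be most careful about. For any $b\in\mathscr{B}$ one has $b\Z\subset\mathcal{M}_\mathscr{B}$, so the zero set of $\eta$ has density at least $1/b$; since $\nu_\eta(\{x_0=0\})$ is the frequency of $0$'s in $\eta$, this yields $p_0>0$ (equivalently, $p_0=0$ would force $\nu_\eta=\delta_{(\ldots1,1,1\ldots)}$ by stationarity, which is impossible once $\eta$ has a positive density of zeros). Thus, whenever $\mathscr{B}\neq\emptyset$, the measure $\rho$ satisfies $h(\rho,S\times S)=p_0\log2>0=h(M_\ast(\rho),S)$, as required. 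I do not anticipate any genuine obstacle: the argument is a direct transcription of Remark~\ref{rema1.8}, and the whole subtlety lies in the bookkeeping that the Bernoulli randomness is placed precisely on the support of the $0$'s of $\eta$, so that the product collapses to the zero sequence while the conditional entropy rate stays strictly positive.
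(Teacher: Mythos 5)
Your construction is correct, and it takes a genuinely different (and in one respect stronger) route than the paper. The paper states that its proof of Theorem~\ref{pro:bwolne} in full generality is technical and deferred elsewhere, and only proves the model case $\mathscr{B}=\{2\}$ (Proposition~\ref{A5}) by conjugating $S\times S$ to the ``survivors/victims'' system via $\overline{\Psi}$ and then choosing a joining $\kappa$ whose survivor marginal $\widehat{\kappa}$ has zero entropy while the victim marginal $\widetilde{\kappa}$ is Bernoulli. Your $\rho=\phi_\ast(\nu_\eta\otimes B_{\nicefrac12,\nicefrac12})$ with $\phi(x,b)=(x,((1-x_i)b_i)_{i\in\Z})$ implements exactly the same underlying idea --- place independent fair-coin randomness precisely on the coordinates annihilated by $M$ --- but does so by an explicit single-site code that works verbatim for every nonempty $\mathscr{B}$, yields the clean identity $h(\rho,S\times S)=\nu_\eta(\{x:x_0=0\})\log 2$, and needs only the elementary input $\nu_\eta(\{x_0=0\})\geq \underline{d}(\mathcal{M}_\mathscr{B})\geq 1/b>0$ for any $b\in\mathscr{B}$ (computed via quasi-genericity of $\eta$, which is all that is available outside the Erd\H{o}s case; the bound holds along any subsequence, so this is harmless). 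What the paper's heavier machinery buys instead is structural information and a very rich family of such measures $\rho$ (Remark~\ref{a8}), whereas you produce a single witness, which is all the theorem requires. Two small points worth recording: the statement must implicitly exclude $\mathscr{B}=\emptyset$ (there $X_\eta$ is the all-ones fixed point, $M$ is the identity on the second coordinate, and no such $\rho$ exists), as you correctly flag; and your parenthetical ergodicity claim for $\rho$ is fine (product of an ergodic measure with a mixing one is ergodic, and $\phi$ is a factor map) but unnecessary, since $\mathcal{M}$ rather than $\mathcal{M}^e$ is asked for.
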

        \begin{rema}\label{rema:a7}
 The proof of Theorem~\ref{pro:bwolne} is quite technical and it is beyond the scope of this paper, as we put emphasis on the ``independent case''. It will be published elsewhere. We present it below in the simplest possible case, i.e.\ for $\mathscr{B}=\{2\}$. Then $X_\eta=\{a,b\}$, where $a:=(\ldots 0,1,0,1,\underaccent{\circ}{0},1,0,1\ldots)$ and $b:=Sa$, where $S$ is the left shift on $\{0,1\}^\Z$.
            Our approach is ergodic-theoretic and draws heavily on the description of invariant measures for $(\widetilde{X}_\eta,S)$ from~\cite{Ku-Le-We,Dy-Ka-Ku-Le}. The notation is also related to the one in~\cite{Ku-Le-We, Dy-Ka-Ku-Le}.
        \end{rema}
        \begin{prop}\label{A5}
           There exists  $\rho\in \mathcal{M}(\{a,b\}\times \{0,1\}^\Z,S\times S)$ such that $h(\rho,S\times S)>0$, whereas $h(M_\ast(\rho),S)=0$.
        \end{prop}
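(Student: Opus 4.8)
The plan is to build the required coupling by concentrating all the randomness of the second coordinate exactly on the sites that multiplication by the first coordinate annihilates, in the spirit of Remark~\ref{rema1.8}. Recall that here $a_n=1$ iff $n$ is odd and $b=Sa$ satisfies $b_n=1$ iff $n$ is even. I would define $\rho$ to have first marginal $\nu_\eta=\tfrac12(\delta_a+\delta_b)$ and prescribe the conditional law of the second coordinate as follows: conditionally on $y=a$, let the \emph{even} coordinates of $x$ be i.i.d.\ $\mathrm{Bernoulli}(\tfrac12)$ and set $x_n=0$ for odd $n$; conditionally on $y=b$, let the \emph{odd} coordinates of $x$ be i.i.d.\ $\mathrm{Bernoulli}(\tfrac12)$ and set $x_n=0$ for even $n$. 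Equivalently, $\rho$ is supported on the closed, $(S\times S)$-invariant set of pairs $(y,x)$ with $x_n=0$ whenever $y_n=1$, carrying i.i.d.\ fair bits on the complementary (inactive) sites. This is exactly the product-killing device of Remark~\ref{rema1.8} transported to $\{a,b\}\times\{0,1\}^\Z$, and it manifestly violates independence of the two coordinates.

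Next I would check that $\rho$ is a genuine $(S\times S)$-invariant measure. Since $Sa=b$ and $Sb=a$, the shift interchanges the two phases; and if $x$ is supported on the evens with i.i.d.\ fair entries, then $Sx$ is supported on the odds with i.i.d.\ fair entries, which is precisely the conditional law prescribed in the opposite phase. Hence $(S\times S)_\ast\rho=\rho$. The first-coordinate marginal is $\nu_\eta$ automatically, as $\nu_\eta$ is the unique $S$-invariant measure on the two-point orbit $\{a,b\}$, so the hypotheses of the statement (and of Theorem~\ref{pro:bwolne}) hold. As a bonus one can verify ergodicity: $(S\times S)^2$ fixes each phase and acts there as the shift on an i.i.d.\ sequence, hence is a Bernoulli shift, and the usual height-two tower argument (any invariant set meets each phase in an $(S\times S)^2$-invariant set, and the two pieces have equal mass) upgrades this to ergodicity of $S\times S$, so in fact $\rho\in\mathcal{M}^e$.

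Finally I would read off the two entropy statements. Because $x_n=0$ on $\{n:y_n=1\}$ by construction, the coordinatewise product satisfies $M(y,x)=(y_nx_n)_{n\in\Z}\equiv\mathbf 0$ for $\rho$-a.e.\ $(y,x)$, so
\[
   M_\ast(\rho)=\delta_{\mathbf 0}\qquad\text{and}\qquad h(M_\ast(\rho),S)=0 .
\]
For the positivity of $h(\rho,S\times S)$, the time-zero coordinate partition is generating, the phase contributes zero rate (one bit total), and conditionally on the phase a window $[0,N-1]$ contains about $N/2$ inactive sites carrying independent fair bits, so $H\big((y,x)_{[0,N-1]}\big)\sim\tfrac{N}{2}\log 2$ and hence
\[
   h(\rho,S\times S)=\tfrac12\log 2>0 .
\]
I do not expect any serious obstacle: the construction is explicit and the entropy computations are immediate. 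The one point that genuinely requires care is the invariance verification, namely confirming that shifting consistently exchanges the two phases with the exchange of ``supported on evens'' and ``supported on odds''; everything else follows by the complementary-support design.
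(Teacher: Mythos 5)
Your construction is correct, and in fact the measure you write down is exactly the one the paper obtains from its recommended choice of ingredients: the paper first builds a conjugacy $\overline{\Psi}$ of $(\{a,b\}\times\{0,1\}^\Z, S\times S)$ with a system $\overline{S}$ that separates the second coordinate into ``survivors'' (sites where the first coordinate is $1$) and ``victims'' (sites where it is $0$), then takes an arbitrary joining $\kappa$ of a zero-entropy survivor measure $\widehat{\kappa}$ and a positive-entropy victim measure $\widetilde{\kappa}$, and pulls back; its suggested instance $\widehat{\kappa}=\nu_\eta\otimes\delta_{(\ldots,0,0,0,\ldots)}$, $\widetilde{\kappa}=\nu_\eta\otimes B_{\nicefrac12,\nicefrac12}$ is precisely your ``zeros on the active sites, i.i.d.\ fair bits on the killed sites'' measure. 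So the key idea --- concentrate all randomness on the coordinates annihilated by $M$ --- is identical, and your direct verification of invariance, of $M_\ast(\rho)=\delta_{\mathbf 0}$, and of $h(\rho,S\times S)=\tfrac12\log 2$ is sound (the ergodicity check via the height-two tower over the Bernoulli action of $(S\times S)^2$ is a correct bonus, not needed for the statement). What your more elementary route gives up is generality: the paper's survivors/victims formalism shows that \emph{any} pair $(\widehat{\kappa},\widetilde{\kappa})$ with $h(\widehat{\kappa})=0<h(\widetilde{\kappa})$ works, so the set of measures $\rho$ with this behaviour is very rich (Remark~\ref{a8}), and the same mechanism is what generalizes to arbitrary $\mathscr{B}$-free systems in Theorem~\ref{pro:bwolne}; your argument establishes the single example needed for Proposition~\ref{A5} but does not by itself exhibit that richness.
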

        Define $
           \overline{\Psi}\colon \{a,b\}\times \{0,1\}^\Z \to \{a,b\}\times \{0,1\}^\Z\times \{0,1\}^\Z$ in the following way:
        \begin{equation*}
           \begin{alignedat}{3}
              \overline{\Psi}(a,x)&=(a,\widehat{x}_a,\widetilde{x}_a),&\text{ where } \widehat{x}_a=(\dots,\underaccent{\circ}{x}_1,x_3,x_5,\dots) \text{ and }&\widetilde{x}_a=(\dots,\underaccent{\circ}{x}_0,x_2,x_4,\dots),\\
              \overline{\Psi}(b,x)&=(b,\widehat{x}_b,\widetilde{x}_b),&\text{ where } \widehat{x}_b=(\dots,\underaccent{\circ}{x}_0,x_2,x_4,\dots) \text{ and }&\widetilde{x}_b=(\dots,\underaccent{\circ}{x}_1,x_3,x_5,\dots).
           \end{alignedat}
        \end{equation*}
        Clearly, $\widetilde{x}_c=\widehat{x}_{Sc}$ for $c\in \{a,b\}$. Notice that one can interpret $\widehat{x}_c$ as ``survivors'', i.e.\ these coordinates of $x$ that ``survive'' after applying $M$ to $(c,x)$, $c\in \{a,b\}$. Similarly, $\widetilde{x}_c$, $c\in \{a,b\}$ can be seen as ``victims'', i.e.\ the coordinates of $x$ that are ``killed'' after applying $M$ to $(c,x)$. Moreover, $\overline{\Psi}$ is a homeomorphism.
        \begin{lema}
           We have $\overline{\Psi}\circ (S\times S) = \overline{S} \circ \overline{\Psi}$, where $$\overline{S}(a,y,z)=(Sa,y,Sz)\text{ and } \overline{S}(b,y,z)=(Sb,Sy,z).$$
        \end{lema}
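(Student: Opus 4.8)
The plan is to verify the intertwining relation $\overline{\Psi}\circ(S\times S)=\overline{S}\circ\overline{\Psi}$ directly, by a coordinate-by-coordinate computation carried out separately on the two fibers $c=a$ and $c=b$. Everything rests on two elementary facts. First, since $a$ and $b$ constitute a single period-$2$ orbit of the shift, we have $Sa=b$ and $Sb=a$; this is what correctly identifies the target fiber after applying $S\times S$. Second, reading off the circled origin in the definition of $\overline{\Psi}$, the ``survivor'' and ``victim'' sequences are precisely the odd- and even-indexed subsequences of $x$, with $(\widehat{x}_a)_k=x_{2k+1}$, $(\widetilde{x}_a)_k=x_{2k}$, $(\widehat{x}_b)_k=x_{2k}$, and $(\widetilde{x}_b)_k=x_{2k+1}$. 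Combined with $(Sx)_n=x_{n+1}$, these formulas reduce the whole statement to keeping track of indices.

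First I would treat the fiber over $a$. Here $(S\times S)(a,x)=(b,Sx)$ because $Sa=b$, so $\overline{\Psi}(S\times S)(a,x)=\overline{\Psi}(b,Sx)=(b,\widehat{(Sx)}_b,\widetilde{(Sx)}_b)$. Computing, $(\widehat{(Sx)}_b)_k=(Sx)_{2k}=x_{2k+1}=(\widehat{x}_a)_k$ and $(\widetilde{(Sx)}_b)_k=(Sx)_{2k+1}=x_{2k+2}=(\widetilde{x}_a)_{k+1}=(S\widetilde{x}_a)_k$. On the other side, $\overline{S}\,\overline{\Psi}(a,x)=\overline{S}(a,\widehat{x}_a,\widetilde{x}_a)=(b,\widehat{x}_a,S\widetilde{x}_a)$, which agrees term for term. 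The fiber over $b$ is entirely symmetric: $(S\times S)(b,x)=(a,Sx)$ since $Sb=a$, and the same bookkeeping yields $(\widehat{(Sx)}_a)_k=x_{2k+2}=(S\widehat{x}_b)_k$ and $(\widetilde{(Sx)}_a)_k=x_{2k+1}=(\widetilde{x}_b)_k$, matching $\overline{S}(b,\widehat{x}_b,\widetilde{x}_b)=(a,S\widehat{x}_b,\widetilde{x}_b)$.

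Conceptually, the content is that a time shift of $x$ interchanges the two parity classes of its coordinates in exactly the way $S$ interchanges $a$ and $b$, and a coordinate that survives multiplication by $a$ continues to survive multiplication by $b$ after shifting. I do not expect any real obstacle, as the lemma is purely an indexing check; the only point requiring care is the position of the circled origin relative to the support of the $1$'s. This produces a genuine asymmetry between the two transitions: in the $a\to b$ step the distinguished survivor remains distinguished (no shift on the survivor coordinate), whereas in the $b\to a$ step it is displaced by one (hence the shift $Sy$). Making sure this asymmetry comes out right — and that one uses $Sb=a$ rather than merely $Sb=S^2a$ so the target fiber is labelled correctly — is precisely the bookkeeping that the definition of $\overline{S}$ is designed to encode.
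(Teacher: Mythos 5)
Your proof is correct and is exactly the ``direct calculation'' that the paper's proof consists of (the paper gives no details); the index bookkeeping $(\widehat{(Sx)}_b)_k=x_{2k+1}=(\widehat{x}_a)_k$, $(\widetilde{(Sx)}_b)_k=x_{2k+2}=(S\widetilde{x}_a)_k$ and its mirror on the $b$-fiber all check out, as does the use of $Sa=b$, $Sb=a$. Nothing further is needed.
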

        \begin{proof}
           Direct calculation.
        \end{proof}
Denote the restriction of $\overline{S}$ to the first to coordinates by $\widehat{S}$ and to the first and third coordinate by $\widetilde{S}$. All these maps are homeomorphisms. Thus, $\overline{S}$ can be viewed as a (topological) joining of $\widehat{S}$ and $\widetilde{S}$. 
Both, $\widehat{S}$ and $\widetilde{S}$ act on $\{a,b\}\times \{0,1\}^\Z$; in this joining, the first coodinates of $\widehat{S}$ and $\widetilde{S}$ are glued diagonally. Each choice of an invariant measure for $\overline{S}$ yields a joining (in ergodic theoretic sense) of the corresponding projections for $\widehat{S}$ and $\widetilde{S}$.

        \begin{lema}
           We have $M=m\circ \pi_{1,2}\circ\overline\Psi$, where $\pi_{1,2}\colon \{a,b\}\times \{0,1\}^\Z \times \{0,1\}^\Z\to \{a,b\}\times\{0,1\}^\mathbb{Z}$ stands for the projection onto the first two coordinates and $m\colon \{a,b\}\times \{0,1\}^\Z\to \widetilde{\{a,b\}}=M(\{a,b\}\times \{0,1\})$ is given by $m(a,y)=(\dots, \underaccent{\circ}{0},y_0,0,y_1,0,y_2,\dots)$, $m(b,y)=(\dots, \underaccent{\circ}{y}_0,0,y_1,0,y_2,0,\dots)$.
        \end{lema}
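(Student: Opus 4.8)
The statement is an identity of maps and, just as for the preceding lemma, the plan is to verify it by a direct coordinate-by-coordinate computation after unwinding every definition; there is no conceptual content beyond careful bookkeeping of parities. First I would record the explicit shape of the two points of $X_\eta$. From the given expansion of $a$ we read off that $a_i=1$ precisely when $i$ is odd, and since $b=Sa$ we have $b_i=a_{i+1}=1$ precisely when $i$ is even. Because $M$ is coordinatewise multiplication, this immediately gives $(M(a,x))_i=x_i$ for $i$ odd and $(M(a,x))_i=0$ for $i$ even, while $(M(b,x))_i=x_i$ for $i$ even and $(M(b,x))_i=0$ for $i$ odd.

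Next I would split into the two cases according to the first coordinate. For $c=a$, the definition of $\overline{\Psi}$ gives $\pi_{1,2}\bigl(\overline{\Psi}(a,x)\bigr)=(a,\widehat{x}_a)$, where $\widehat{x}_a$ collects exactly the odd-indexed (surviving) coordinates of $x$, i.e.\ $(\widehat{x}_a)_j=x_{2j+1}$. Applying $m$ in its first branch reinserts these at the odd positions and places $0$ at the even ones: $\bigl(m(a,y)\bigr)_{2k+1}=y_k$ and $\bigl(m(a,y)\bigr)_{2k}=0$. Taking $y=\widehat{x}_a$ yields $\bigl(m(a,\widehat{x}_a)\bigr)_{2k+1}=x_{2k+1}$ and $\bigl(m(a,\widehat{x}_a)\bigr)_{2k}=0$, which is exactly $M(a,x)$ as computed above.

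For $c=b$ the argument is identical with the roles of even and odd interchanged: $\pi_{1,2}\bigl(\overline{\Psi}(b,x)\bigr)=(b,\widehat{x}_b)$ with $(\widehat{x}_b)_j=x_{2j}$, and $m$ in its second branch gives $\bigl(m(b,y)\bigr)_{2k}=y_k$, $\bigl(m(b,y)\bigr)_{2k+1}=0$, so that $m(b,\widehat{x}_b)=M(b,x)$. Since the two cases exhaust $\{a,b\}\times\{0,1\}^\Z$, the identity $M=m\circ\pi_{1,2}\circ\overline{\Psi}$ follows. The only point to watch is that the index relabelling $(\widehat{x}_c)_j$ and the insertion rule for $m$ line up on the same parity class; once the circle-marked origin is used to pin down the indexing of $\widehat{x}_c$ and of $m$, everything matches, so the main (and only) obstacle is purely notational.
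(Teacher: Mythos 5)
Your proof is correct and is exactly the computation the paper intends — its own proof is just the words ``Direct calculation.'' The parity bookkeeping (with $a_i=1$ iff $i$ is odd, $b_i=1$ iff $i$ is even, $(\widehat{x}_a)_j=x_{2j+1}$, $(\widehat{x}_b)_j=x_{2j}$, and the matching insertion rules for $m$) all checks out, including at negative indices.
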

        \begin{proof}
           Direct calculation.
        \end{proof}
        We can summarize the above in the following commuting diagram ($\pi_{1,3}$ stands for the projection onto the first and third coordinate):
        $$
           \begin{tikzpicture}[baseline=(current  bounding  box.center)]
              \node (A1) at (0,0) {$(\{a,b\}\times \{0,1\}^\Z,S\times S)$};
              \node (A2) at (5,0) {$(\{a,b\}\times \{0,1\}^\Z\times \{0,1\}^\Z,\overline{S})$};
              \node (B1) at (3,-2) {$(\{a,b\}\times \{0,1\}^\Z,\widehat{S})$};
              \node (B2) at (7,-2) {$(\{a,b\}\times \{0,1\}^\Z,\widetilde{S})$};
              \node (C) at (0,-4) {$(\widetilde{\{a,b\}},S)$};
              \draw[->]
              (A1) edge node[auto] {$\overline{\Psi}$} (A2)
              (A2)  edge node[auto] {$\pi_{1,2}$} (B1)
              (A2)  edge node[auto] {$\pi_{1,3}$} (B2)
              (B1) edge node[auto] {$m$} (C)
              (A1) edge node[auto] {$M$} (C);
           \end{tikzpicture}
        $$

        \begin{proof}[Proof of Proposition~\ref{A5}]
        We start with any $\widehat{\kappa}\in \mathcal{M}^e(\{a,b\}\times \{0,1\}^\Z,\widehat{S})$ and $\widetilde{\kappa}\in \mathcal{M}^e(\{a,b\}\times \{0,1\}^\Z,\widetilde{S})$. The projection of both $\widehat{\kappa}$ and $\widetilde{\kappa}$ onto the first coordinate is the unique $S$-invariant measure on $\{a,b\}$, i.e.\ equals $\frac{1}{2}\left(\delta_{a} + \delta_{b}\right)$. Note that this is nothing but $\nu_\eta$ corresponding to $\mathscr{B}=\{2\}$.
           Therefore, we can ``glue'' these coordinates together diagonally and find $\kappa\in \mathcal{M}(\{a,b\}\times \{0,1\}^\Z\times \{0,1\}^\Z,\overline{S})$ such that
           \[
              (\pi_{1,2})_\ast(\kappa)=\widehat{\kappa} \text{ and }(\pi_{1,3})_\ast(\kappa)=\widetilde{\kappa}
           \]
           (for instance, one can take so-called relatively independent extension of the diagonal joining of the first coordinates).

           Now, suppose additionally that $0=h(\widehat{S},{\widehat{\kappa}})<h(\widetilde{S},\widetilde{\kappa})$ (e.g., one can take $\widehat{\kappa}=\nu_\eta\otimes \delta_{(\ldots,0,0,0,\ldots)}$ and $\widetilde{\kappa}=\nu_\eta\otimes B_{\nicefrac12,\nicefrac12}$). Then
           \[
              h(\widetilde{S},\widetilde{\kappa})\leq h(\overline{S},\kappa)\leq h(\widetilde{S},\widetilde{\kappa}) + h(\widehat{S},\widehat{\kappa})=h(\widetilde{S},\widetilde{\kappa}),
           \]
           where the first inequality follows from the fact that $(S,\widetilde{\kappa})$ is a factor of $(\overline{S},\kappa)$, the second one is a direct consequence of $(\overline{S},\kappa)$ being a joining of $(S,\widetilde{\kappa})$ and $(S,\widehat{\kappa})$, and we use $h({\widehat{\kappa}})=0$.
           Thus,
           \[
              h(\overline{S},\kappa)=h(\widetilde{S},\widetilde{\kappa})>0.
           \]
           Moreover,
           $$
              h(S,m_\ast (\widehat{\kappa}))\leq h(\widehat{S},\widehat{\kappa})=0, \text{ whence }h(S,m_\ast (\widehat{\kappa}))=0.
           $$
           Let $\rho:=(\overline{\Psi}^{-1})_\ast(\kappa)$. We obtain
           $$
              h(\rho,S\times S)=h(\overline{S},\kappa)>0.
           $$
           Moreover, $M_\ast(\rho)=(m\circ \pi_{1,2}\circ\overline\Psi)_\ast(\rho)=(m\circ \pi_{1,2})_\ast(\kappa)=m_\ast(\widehat{\kappa})$ and it follows immediately that
           $$
              h(M_\ast(\rho),S)=0.
           $$
        \end{proof}
        \begin{rema}\label{a8}
           Notice that in the above proof $\widehat{\kappa}$ and $\widetilde{\kappa}$ is arbitrary, the only additional assumption was concerned with their entropy. This (together with the fact that $\overline{\Psi}$ is a homeomorphism) indicates that the set of measures $\rho$ satisfying the assertion of Proposition~\ref{A5} is very rich.
        \end{rema}
        
        \paragraph*{Acknowlegements}
Research of the first author is supported by Narodowe Centrum Nauki grant UMO-2019/33/B/ST1/00364.

        \bibliographystyle{siam}
        \allowdisplaybreaks
        \small
        \bibliography{entropy_of_conv.bib}

        \bigskip
        \footnotesize
        \noindent
        Joanna Ku\l aga-Przymus\\
        \textsc{Faculty of Mathematics and Computer Science, Nicolaus Copernicus University,\\ Chopina 12/18, 87-100 Toru\'{n}, Poland}\par\nopagebreak
        \noindent
        \href{mailto:joanna.kulaga@gmail.com}
        {\texttt{joanna.kulaga@gmail.com}}

        \medskip

        \noindent
        Micha\l{} D.\ Lema\'{n}czyk \\
        \textsc{Faculty of Mathematics, Informatics and Mechanics, Warsaw University,\\ Stefana Banacha 2, 02-097 Warsaw, Poland}\par\nopagebreak
        \noindent
        \href{mailto:m.lemanczyk@mimuw.edu.pl}
        {\texttt{m.lemanczyk@mimuw.edu.pl}}
\end{document}